\theoremstyle{plain}
\newtheorem{theorem}{Theorem}
\newtheorem*{Theorem}{Theorem 13'}
\newtheorem{proposition}[theorem]{Proposition}
\newtheorem{lemma}[theorem]{Lemma}
\theoremstyle{definition}
\newtheorem{remark}{Remark}
\DeclareMathOperator{\End}{End}
\DeclareMathOperator{\Aut}{Aut}
\DeclareMathOperator{\ssupp}{ssupp}
\renewcommand{\geq}{\geqslant}
\renewcommand{\leq}{\leqslant}
\begin{document}
\title[Centering probability measures]{Centering problems for
 probability\\ measures on finite dimensional\\ vector spaces}
\author[Andrzej \L uczak]{Andrzej \L uczak}
\address{Faculty of Mathematics and Computer Science\\
        \L\'od\'z University\\
        ul.~ S. Banacha 22\\
        90-238~ \L\'od\'z, Poland}
\email{anluczak@math.uni.lodz.pl}
\thanks{Work supported by KBN grant 2 1020 91 01}
\keywords{Centering probability measures, infinitely divisible
laws,
quasi-decomposable (operator-semistable and operator-stable) laws}
\subjclass{Primary: 60E07; Secondary: 60B11}
\date{}
\begin{abstract}
 The paper deals with various centering problems for probability
 measures on finite dimensional vector spaces. We show that for every
 such measure there exists a vector $h$ satisfying\newline
 $\mu*\delta(h)=S(\mu*\delta (h))$ for each
 symmetry $S$ of $\mu$, generalizing thus Jurek's result obtained
 for full measures. An explicit form of the $h$ is given for
 infinitely divisible $\mu$. The main result of the paper consists
 in the analysis of quasi-decomposable (operator-semistable and
 operator-stable) measures and finding conditions for the
 existence of a `universal centering' of such a measure to a
 strictly quasi-decomposable one.
\end{abstract}
\maketitle

\section*{Introduction}
The general setup for the problems considered in this paper may be
formulated as follows. For a finite dimensional vector space $V$, we
are given a class $\Phi$ of transformations defined on a subset
$\mathcal{S}$ of all probability measures $\mathcal{P}(V)$ on $V$ and
taking values in $\mathcal{P}(V)$. Let $\mu$ be a measure belonging
to $\mathcal{S}$, and denote by $\Phi_0(\mu)$ a subset of $\Phi$
consisting of the elements $\varphi$ having the property
\[
 \mu=\varphi(\mu)*\delta(h_{\varphi}),
\]
with some $h_{\varphi}\in V$. We are looking for a `universal
centering' of $\mu$ with respect to $\Phi_0(\mu)$, by which is
meant an element $h^{\prime}\in V$, independent of
$\varphi\in\Phi_0(\mu)$, such that for all
$\varphi\in\Phi_0(\mu)$ we have
\[
 \mu*\delta(h^{\prime})=\varphi(\mu*\delta(h^{\prime}).
\]
Two cases are dealt with in detail:
\begin{enumerate}
 \item[1.] $\mathcal{S}=\mathcal{P}(V),\,
 \Phi=\End V\text{ and }\varphi(\mu)=\mu\circ\varphi^{-1}$.
 \item[2.] $\mathcal{S}$ --- infinitely divisible measures,
 $\Phi=\{\varphi=(a,A): a\in(0,\infty),\newline A\in
 \End V\},\text{ and }\varphi(\mu)=(A\mu)^{1/a}$.
\end{enumerate}
The first case was considered by Z. Jurek in \cite{J2} for $S$ being
the set of full measures, so that $\Phi_0(\mu)$ is the so-called
symmetry group of $\mu$; as for the second, note that in order that
$\Phi_0(\mu)$ be nontrivial $\mu$ must be $(a, A)$-quasi-decomposable
with some $a\ne 1$ and $A\in\End V$, i.e.,
\[
 \mu^a = A\mu *\delta(h),\tag{$\ast$}
\]
and our problem consists in centering $\mu$ to a strictly
quasi-decomposable measure, that is we look for an $h'\in V$ such
that
\[
 (\mu * \delta(h'))^a = A(\mu * \delta(h'))
\]
for all pairs $(a, A)$ satisfying ($\ast$). In the case when ($\ast$)
is satisfied by pairs $(t, t^B)$ for all $t >0$, i.e., when $\mu$ is
operator-stable, a partial question concerning only existence and not
universality has been solved in \cite{Sa}. However, also in this
case, our solution of the general problem is given in a form which
appears to be well suited to both (operator-stable as well as
operator-semistable) possible situations and is considerably
different in form and method from that of \cite{Sa}.

The paper bears a direct connection to the theory of operator-limit
distributions on finite dimensional vector spaces. A useful source of
information about this theory is monograph \cite{JM} to which the
reader is referred for additional facts, explanations, comments etc.

\section{Preliminaries and notation}
Throughout the paper, $V$ will stands for a finite dimensional real
vector space with an inner product $(\cdot, \cdot)$ yielding a norm
$\|\cdot\|$, and the $\sigma$-algebra $\mathcal{B}(V)$ of its Borel
subsets. We let $\End V$ denote the set of all linear operators on
$V$, whereas $\Aut V$ stands for the linear invertible operators.

Let $A\colon V\to W$ be a linear mapping into a finite dimensional
real vector space $W$, and let $\mu$ be a (probability) measure over
$(V,\,\mathcal{B}(V))$. The measure $A\mu$ on $(W,\,\mathcal{B}(W))$
is defined by
\[
 A\mu(E) = \mu(A^{-1}(E)),\qquad E\in\mathcal{B}(W).
\]
In particular, if $\xi\colon\Omega\to V$ is a random variable taking
values in $V$ and $\mu$ is the law of $\xi$, then $A\mu$ is the law
of $A\xi$.

The following equalities are easily verified
\[
 A(B\mu) = (AB)\mu,\quad \widehat{A\mu}(v) =\hat\mu(A^*v),
 \quad A(\mu * \nu) = A\mu * A\nu,
\]
for linear operators $A,B$ and probability measures $\mu,\nu$ (here
$\hat{}$ denotes the characteristic function, and the asterisk $*$
stands for the convolution of measures or for the adjoint of an
operator, as the case may be). By $\delta(h)$ we denote the
probability measure concentrated at point $h$.

A probability measure on $V$ is called \emph{full} if it is not
concentrated on any proper hyperplane of $V$. Let $\mu$ be a
probability measure on $V$. Then there exists a smallest hyperplane
$U$ of $V$ such that $\mu$ is concentrated on $U$ and, by a little
abuse of language, we can speak of $\mu$ being full on $U$. In this
case, there is the unique subspace $W$ of $V$ and an element $h\in V$
such that $U = W +h$. We call $W$ the \emph{supporting subspace} of
$\mu$ and denote it by $W = \ssupp(\mu)$. It is clear that
$\ssupp(\mu) = \{0\}$ if and only if $\mu =\delta(h)$ for some $h\in
V$, and $\ssupp(\mu) = V$ if and only if $\mu$ is full.

A linear operator  $S$ on $V$ is called a \emph{symmetry} of $\mu$ if
there is an $h\in V$ such that $\mu = S\mu * \delta(h)$. The set of
all symmetries of $\mu$ is denoted by $\mathbb{A}(\mu)$. Let us
recall that if $\mu$ is full, then $\mathbb{A}(\mu)$ is a compact
subgroup of $\Aut V$ (cf. \cite[Corollary 2.3.2]{JM} or \cite{Sh,U}).
We recall that an infinitely divisible measure $\mu$ on $V$ has the
unique representation as a triple $[m, D, M]$, where $m\in V$, $D$ is
a non-negative linear operator on $V$, and $M$ is  the L\'evy
spectral measure of $\mu$, i.e. a Borel measure defined on $V_0 = V
-\{0\}$ such that $\int_{V_0} \|v\|^2/(1 + \|v\|^2)\,M(dv)<\infty$.
The characteristic function of $\mu$ has then the form
\[
 \hat \mu(u) =\exp\bigg\{i(m, u) -\frac{1}{2}(Du, u) +\int_{V_0}
 \Big(e^{i(v, u)} -1 -\frac{i(v, u)}{1+\|v\|^2}\Big)\,M(dv)\bigg\}
\]
(cf. e.g. \cite{P}). A straightforward calculation shows that
for\linebreak $\mu =[m, D, M]$ and $A\in\End V$, we have $A\mu =[m',
ADA^*, AM]$, where
\begin{equation}\label{1}
 m' = Am +\int_{V_0}\frac{\|u\|^2-\|Au\|^2}{(1+\|A u\|^2)
 (1+\|u\|^2)}Au \,M(du).
\end{equation}

One of the main objects considered in this paper is the class of
operator-semistable (operator-stable) or, more generally,
quasi-de\-com\-po\-sa\-ble measures. A measure $\mu$ on $V$ is called
$(a, A)$-\emph{quasi-decomposable} with $a >0$, $a\ne 1$, $A\in\End
V$, if it is infinitely divisible and
\begin{equation}\label{2}
 \mu^a = A\mu *\delta(h_{a, A})\qquad\text{for some}\quad h_{a, A}\in V.
\end{equation}
If $h_{a, A} =0$, then $\mu$ is called \emph{strictly} $(a,
A)$-\emph{quasi-decomposable}. $\mu$ is called
\emph{quasi-decomposable} if it is $(a, A)$-quasi-decomposable for
some pair $(a, A)$. It is known that a quasi-decomposable measure is
operator semistable, i.e. arises as the limit law of a sequence
\[
 A_n\nu^{k_n} *\delta(h_n),
\]
where $\nu\in\mathcal{P}(V)$, $A_n\in \End V$, $h_n\in V$,
$k_{n+1}/k_n\to r\geq 1$ and the power $\nu^{k_n}$ is taken in the
sense of convolution, and the converse is true if the limit measure
is full (cf. \cite{HM,J,J1,K,L1,Sh} for a more detailed description
of this class).

\section{Universal centering with respect to symmetries}
In this section we show that for any probability measure $\mu$ on $V$
there exists an $h'\in V$ such that, for each $S\in\mathbb{A}(\mu)$,
\[
 \mu * \delta(h') = S(\mu * \delta(h')).
\]
As noted in the Introduction, this problem was solved in \cite{J2}
under the fullness assumption on $\mu$.

In addition to the general solution, we give an explicit form of the
$h'$ for $\mu$ being infinitely divisible.

Our first lemma is a slight refinement of Proposition 1 from \cite{Sh}.

\begin{lemma}\label{L1}
Let $W$ be a subspace of $V$ and denote by $W^{\perp}$ its orthogonal
subspace. Then $|\hat{\mu}(v)| = 1$ for $v\in W^{\perp}$ if and only
if $\mu =\nu *\delta(h)$ with $\nu(W) =1$ and $h\in W^{\perp}$.
\end{lemma}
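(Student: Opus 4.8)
The plan is to treat the two implications separately, the forward direction being the one requiring work.

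For the implication ``$\Leftarrow$'', I would argue by direct computation. If $\mu = \nu * \delta(h)$ with $\nu(W)=1$ and $h \in W^{\perp}$, the convolution rule gives $\hat\mu(v) = \hat\nu(v)\,e^{i(v,h)}$. For $v \in W^{\perp}$ every $x$ in the support of $\nu$ lies in $W$, so $(v,x)=0$ and therefore $\hat\nu(v) = \int_W e^{i(v,x)}\,\nu(dx) = \nu(W) = 1$; hence $|\hat\mu(v)| = |e^{i(v,h)}| = 1$.

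For ``$\Rightarrow$'' I would first transport the hypothesis to the subspace $W^{\perp}$ via the orthogonal projection $P$ of $V$ onto $W^{\perp}$. Since $P = P^{*}$ and $Pv = v$ for $v \in W^{\perp}$, the identity $\widehat{P\mu}(v) = \hat\mu(P^{*}v) = \hat\mu(v)$ holds for all $v \in W^{\perp}$. Thus, regarding $P\mu$ as a probability measure on the space $W^{\perp}$, its characteristic function has modulus $1$ at every point. The key step is then the claim that any probability measure $\rho$ on a finite dimensional space with $|\hat\rho| \equiv 1$ is a point mass. I would prove this by passing to $\rho * \tilde\rho$, where $\tilde\rho$ denotes the reflection of $\rho$ through the origin: its characteristic function equals $|\hat\rho|^{2} \equiv 1 = \widehat{\delta(0)}$, so uniqueness of characteristic functions gives $\rho * \tilde\rho = \delta(0)$. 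This says that two independent copies of $\rho$ differ by $0$ almost surely, which forces $\rho = \delta(h)$ for a single point $h$.

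Applying the claim to $\rho = P\mu$ yields $P\mu = \delta(h)$ with $h \in W^{\perp}$, which means $Px = h$ for $\mu$-almost every $x$; equivalently $\mu$ is concentrated on the affine subspace $W + h$. Setting $\nu = \mu * \delta(-h)$ then produces a measure with $\nu(W) = 1$, and $\mu = \nu * \delta(h)$ with $h \in W^{\perp}$, completing the proof. I expect the main obstacle to be the point-mass claim; the reflection argument is short but relies on having the modulus-$1$ condition on all of $W^{\perp}$ rather than on a single vector, which is exactly what lets it descend to the characteristic function of $P\mu$ on the whole of $W^{\perp}$.
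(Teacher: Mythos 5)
Your proof is correct, and its skeleton is the same as the paper's: project $\mu$ orthogonally onto $W^{\perp}$, observe that the characteristic function of $P\mu$ has modulus one everywhere on $W^{\perp}$, conclude $P\mu=\delta(h)$ with $h\in W^{\perp}$, and peel off the shift; the converse is the same direct computation in both. The differences are in the two supporting facts. First, you actually prove the degeneracy claim (unimodular characteristic function implies point mass) via the symmetrization $\rho*\tilde\rho=\delta(0)$ together with the remark that two independent, almost surely equal random variables must be constant; the paper just asserts this step, which is standard. Second, and more substantively, once $P\mu=\delta(h)$ you finish elementarily: $Px=h$ for $\mu$-almost every $x$, so $\mu$ lives on the affine subspace $W+h$ and $\nu=\mu*\delta(-h)$ does the job. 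The paper instead writes $\mu=P\mu*\mu*\delta(-h)$ and invokes a cited factorization theorem ($\mu=P\mu*(I-P)\mu$, from Urbanik or Jurek--Mason) to set $\nu=(I-P)\mu$. The two choices of $\nu$ coincide (on the support of $\mu$ one has $(I-P)x=x-h$), but your route is self-contained and avoids the external reference, at the cost of a slightly longer write-up of the point-mass claim.
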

\begin{proof}
Assume that $|\hat{\mu}(v)| =1$ for $v\in W^{\perp}$ and let $P$ be
the orthogonal projection on $W^{\perp}$. Then $P\mu$ is a measure
concentrated on $W^{\perp}$ and for any $v\in V$, we have
\[
 |\widehat{P\mu}(v)| = |\hat{\mu}(P v)| =1,
\]
thus $P\mu =\delta(h)$ for some $h\in W^{\perp}$, which gives the
equality
\[
 \mu = P \mu * \mu * \delta(-h).
\]
On account of \cite[Proposition 1.5]{U} or \cite[Theorem
2.3.6(b)]{JM} we have
\[
 \mu = P\mu * (I - P)\mu,
\]
where $I$ is the identity operator, and putting $\nu = (I - P)\mu$,
we get the formula $\mu =\nu * \delta(h)$ with $\nu$ concentrated on
$W$.

Conversely, if $\mu =\nu *\delta(h)$ with $\nu$ concentrated on $W$,
then $\hat{\nu}(v) =1$ for $v \in W^\perp$, and so $|\hat\mu(v)| =
|\hat\nu(v)| =1$ for $v\in W^{\perp}$.
\end{proof}

\begin{lemma}\label{L2}
Let $W$ be a subspace of $V$. Assume that $\mu$ is concentrated on
$W$ and the decomposition $\mu = \nu * \lambda$ holds. Then $\nu
=\nu_1 * \delta(h)$, $\lambda =\lambda_1 * \delta(-h)$, where $\nu_1$
and $\lambda_1$ are concentrated on $W$, and $h\in W^{\perp}$.
\end{lemma}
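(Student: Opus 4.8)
The plan is to pass to characteristic functions and reduce everything to Lemma~\ref{L1}. Since $\mu$ is concentrated on $W$, for $v\in W^{\perp}$ we have $(x,v)=0$ for $\mu$-almost every $x$, hence $\hat\mu(v)=1$; in particular $|\hat\mu(v)|=1$ on $W^{\perp}$. Writing $\hat\mu=\hat\nu\,\hat\lambda$ and using that $|\hat\nu|\le 1$ and $|\hat\lambda|\le 1$ everywhere, the relation $|\hat\nu(v)\hat\lambda(v)|=1$ forces $|\hat\nu(v)|=|\hat\lambda(v)|=1$ for every $v\in W^{\perp}$.

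At this point Lemma~\ref{L1} applies to each factor separately: it yields decompositions $\nu=\nu_1*\delta(h)$ and $\lambda=\lambda_1*\delta(h')$ with $\nu_1(W)=\lambda_1(W)=1$ and $h,h'\in W^{\perp}$. The only remaining task is to identify $h'$ with $-h$, which is where the real content lies.

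To settle this I would substitute the two decompositions back into $\mu=\nu*\lambda$, obtaining $\mu=(\nu_1*\lambda_1)*\delta(h+h')$, where $\nu_1*\lambda_1$ is concentrated on $W$ and $h+h'\in W^{\perp}$. Evaluating characteristic functions at $v\in W^{\perp}$ and using that both $\mu$ and $\nu_1*\lambda_1$ are concentrated on $W$ (so their transforms equal $1$ there) gives $e^{i(h+h',v)}=1$ for all $v\in W^{\perp}$.

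The main---indeed the only---delicate point is to conclude $h+h'=0$ rather than merely $(h+h',v)\in 2\pi\mathbb{Z}$. Here I would exploit that $W^{\perp}$ is a linear subspace: replacing $v$ by $tv$ for $t\in\mathbb{R}$ shows $t(h+h',v)\in 2\pi\mathbb{Z}$ for all $t$, which is possible only if $(h+h',v)=0$. Thus $h+h'$ is orthogonal to all of $W^{\perp}$, i.e. $h+h'\in W$; combined with $h+h'\in W^{\perp}$ this forces $h+h'=0$, so $h'=-h$. Then $\nu_1$ and $\lambda_1$ are exactly the factors concentrated on $W$ claimed in the statement.
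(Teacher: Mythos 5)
Your proof is correct and follows essentially the same route as the paper: factor the characteristic function to get $|\hat\nu|=|\hat\lambda|=1$ on $W^{\perp}$, apply Lemma~\ref{L1} to each factor, and then deduce $h+h'=0$ from $h+h'\in W\cap W^{\perp}$. The paper obtains $h+h'\in W$ directly from the support of $\mu$, whereas you justify it via characteristic functions and the scaling $v\mapsto tv$; this is a minor difference in bookkeeping, not in substance.
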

\begin{proof}
For $v\in W^{\perp}$ we have
\[
 1 = |\hat{\mu}(v)| = |\hat{\nu}(v)| |\hat{\lambda}(v)|,
\]
and thus
\[
 |\hat{\nu}(v)| = |\hat\lambda(v)| =1.
\]
By Lemma \ref{L1}, $\nu =\nu_1 * \delta(h)$, $\lambda =\lambda_1 *
\delta(h')$ for some $h, h'\in W^{\perp}$, where $\nu_1$ and
$\lambda_1$ are concentrated on $W$. Moreover,
\[
 \mu =\mu * \lambda =\nu_1 * \lambda_1 * \delta(h+h'),
\]
which yields $h +h'\in W$, so $h +h' =0$ and the assertion follows.
\end{proof}

The next proposition gives an important property of the supporting
subspaces in the case of a decomposition of measures.

\begin{proposition}\label{P3}
Let $A\in\End V$, and let $\mu, \nu, \lambda$ be probability measures
on $V$ such that
\[
 \mu = A\nu * \lambda.
\]
Put $W = \ssupp(\mu)$, $U = \ssupp(\nu)$. Then $A(U) \subset W$.
\end{proposition}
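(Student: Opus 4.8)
The plan is to pass to characteristic functions and exploit Lemma~\ref{L1}. Using the identities $\widehat{A\nu}(v)=\hat\nu(A^*v)$ and $\widehat{\alpha*\beta}=\hat\alpha\,\hat\beta$ recorded in Section~1, the hypothesis $\mu=A\nu*\lambda$ becomes $\hat\mu(v)=\hat\nu(A^*v)\,\hat\lambda(v)$, and therefore
\[
 |\hat\mu(v)|=|\hat\nu(A^*v)|\,|\hat\lambda(v)|,\qquad v\in V.
\]
I would first fix $v\in W^{\perp}$. Since $W=\ssupp(\mu)$, the measure $\mu$ is full on some coset $W+h$, i.e. $\mu=\mu_0*\delta(h)$ with $\mu_0(W)=1$; the converse implication of Lemma~\ref{L1} then gives $|\hat\mu(v)|=1$ for every $v\in W^{\perp}$.

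Next, because every characteristic function has modulus at most one, the displayed factorization forces both factors on the right to equal $1$ whenever the left-hand side equals $1$. Thus $|\hat\nu(A^*v)|=1$ for all $v\in W^{\perp}$. The decisive observation is that the set $T:=A^*(W^{\perp})$ is again a linear subspace of $V$, and $|\hat\nu|$ is identically $1$ on the whole of $T$. Setting $U_1:=T^{\perp}$, so that $U_1^{\perp}=T$, I would apply Lemma~\ref{L1} to $\nu$ and the subspace $U_1$: from $|\hat\nu|=1$ on $U_1^{\perp}=T$ the lemma yields $\nu=\nu_1*\delta(g)$ with $\nu_1(U_1)=1$ and $g\in U_1^{\perp}$. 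Hence $\nu$ is concentrated on the coset $U_1+g$, so that $U=\ssupp(\nu)\subseteq U_1=\bigl(A^*(W^{\perp})\bigr)^{\perp}$.

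Finally I would unwind this inclusion. The relation $U\subseteq(A^*(W^{\perp}))^{\perp}$ says precisely that $(u,A^*v)=(Au,v)=0$ for all $u\in U$ and all $v\in W^{\perp}$, that is $A(U)\subseteq(W^{\perp})^{\perp}=W$, which is the assertion.

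The step I expect to be delicate is the passage from $|\hat\nu(A^*v)|=1$ to a statement about $\ssupp(\nu)$. It is tempting to try to characterize $\ssupp(\nu)^{\perp}$ directly as $\{w:|\hat\nu(w)|=1\}$, but this fails in general: for lattice-supported measures the latter set need not even be a subspace, so no conclusion about the support could be drawn from modulus-one behaviour at isolated points. What rescues the argument is that $|\hat\nu|=1$ holds on the \emph{entire} subspace $T=A^*(W^{\perp})$, which is exactly the hypothesis Lemma~\ref{L1} requires; this is why one must transport the full subspace $W^{\perp}$ through the adjoint $A^*$ rather than reason vector by vector.
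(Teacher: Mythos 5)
Your proof is correct, and it takes a genuinely different route from the paper's. You work entirely on the Fourier side: from $|\hat\mu|=1$ on $W^{\perp}$ you deduce $|\hat\nu|=1$ on the whole subspace $A^*(W^{\perp})$, apply Lemma~\ref{L1} to that subspace to get $\ssupp(\nu)\subseteq\bigl(A^*(W^{\perp})\bigr)^{\perp}$, and then dualize via $(Au,v)=(u,A^*v)$ to obtain $A(U)\subseteq W$. The paper instead argues on the measure side: it shifts $\nu$ to a measure $\nu'$ full on the linear subspace $U$, checks directly from the definition of fullness that $A\nu'$ is full on $A(U)$, invokes Lemma~\ref{L2} (the decomposition lemma for factors of a measure concentrated on $W$) to place $A\nu'$ on a translate $W-v_0$ with $v_0\in W^{\perp}$, and finally uses fullness of $A\nu'$ on $A(U)$ to force $A(U)\subseteq W-v_0$ and hence $v_0=0$. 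Both arguments ultimately rest on Lemma~\ref{L1} (Lemma~\ref{L2} is itself proved from it), but yours bypasses Lemma~\ref{L2} and the fullness bookkeeping entirely; the gain comes from noticing that the desired inclusion $A(U)\subseteq W$ is equivalent, by orthogonality, to $U\perp A^*(W^{\perp})$, which is exactly what the factorization $|\hat\mu(v)|=|\hat\nu(A^*v)|\,|\hat\lambda(v)|$ delivers. Your closing remark --- that one needs $|\hat\nu|=1$ on the entire subspace $A^*(W^{\perp})$ rather than at isolated points --- correctly identifies the hypothesis of Lemma~\ref{L1} as the crucial input. The paper's route has the minor side benefit of establishing that $A\nu'$ is full on $A(U)$, but that fact is not used elsewhere.
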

\begin{proof}
There is an $h\in V$ such that $\nu$ is full on $U +h$. We have
\[
 \mu * \delta(-h) = A(\nu * \delta(-h)) * \lambda * \delta(Ah -h),
\]
and putting
\[
 \mu' = \mu * \delta(-h),\quad \nu' = \nu * \delta(-h),\quad
 \lambda' = \lambda * \delta(A h - h)
\]
we get
\[
 \mu' = A\nu' * \lambda',
\]
moreover, the measure $\nu'$ is full on $U$. We claim that $A\nu'$ is
full on $A(U)$. Indeed, let $X$ be a subspace of $A(U)$ and let $x_0
= A u_0$ be an element in $A(U)$ such that $A\nu'$ is concentrated on
$X + x_0$. Then
\[
 1 = A\nu'(X +x_0) =\nu'(A^{-1}(X +x_0)) =\nu'(A^{-1}(X) + u_0),
\]
and the fullness of $\nu'$ on $U$ yields $U \subset A^{-1}(X)$, thus
\[
 X\subset A(U) \subset AA^{-1}(X) = X,
\]
showing that $X = A(U)$ and, consequently, $A\nu'$ is full on $A(U)$.
On account of Lemma \ref{L2}, there is a $v_0\in W^{\perp}$ such that
\[
 (A\nu' * \delta(v_0))(W)=1,
\]
and thus
\[
 A\nu'(W - v_0) =1,
\]
consequently,
\[
 A\nu'(A(U)\cap (W - v_0)) =1.
\]
But $A(U)\cap (W -v_0)$ is a hyperplane in $A(U)$, so the fullness of
$A\nu'$ on $A(U)$ implies that
\[
 A(U)\cap (W -v_0) = A(U).
\]
Hence $A(U) \subset W - v_0$, which shows that $v_0\in W$, and
finally, \linebreak $A(U) \subset W$, finishing the proof.
\end{proof}

As an easy consequence of the above proposition and Jurek's result we
get the following theorem on the existence of universal centering
with respect to $\mathbb{A}(\mu)$ for any probability measure $\mu$
on $V$.

\begin{theorem}\label{T4}
Let $\mu$ be a probability measure on $V$. Then there exists $h'\in
V$ such that for each $S\in\mathbb{A}(\mu)$
\[
 \mu * \delta(h') = S(\mu * \delta(h')).
\]
\end{theorem}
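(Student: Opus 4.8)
The plan is to reduce the assertion to Jurek's theorem for full measures by passing to the supporting subspace $W = \ssupp(\mu)$. Choose $h_0\in V$ so that $\mu$ is full on $W + h_0$, and set $\mu_0 = \mu * \delta(-h_0)$, a measure that is full on $W$. The idea is to extract a universal centering vector $h''\in W$ for $\mu_0$ from Jurek's result, applied in the space $W$, and then translate it back to a centering vector for $\mu$.

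First I would show that every $S\in\mathbb{A}(\mu)$ leaves $W$ invariant. Writing the defining relation $\mu = S\mu * \delta(h_S)$ in the form $\mu = S\mu * \lambda$ with $\lambda = \delta(h_S)$ and applying Proposition \ref{P3} (with $\nu = \mu$, so that $\ssupp(\nu) = W$) yields $S(W)\subset W$. Hence the restriction $S_W := S|_W$ is a well-defined linear operator on $W$, and $S\mu_0 = S_W\mu_0$, both measures being concentrated on $W$.

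Next I would check that each $S_W$ is a symmetry of $\mu_0$. Substituting $\mu = \mu_0 * \delta(h_0)$ into $\mu = S\mu * \delta(h_S)$ and convolving with $\delta(-h_0)$ gives $\mu_0 = S_W\mu_0 * \delta(g_S)$ with $g_S = Sh_0 + h_S - h_0$. Since both $\mu_0$ and $S_W\mu_0$ are concentrated on $W$, comparison of their supports forces $g_S\in W$, so $S_W\in\mathbb{A}(\mu_0)$. As $\mu_0$ is full on $W$, Jurek's theorem \cite{J2} (the full-measure case of the present statement, applied in the space $W$) furnishes a single $h''\in W$ with $\mu_0 * \delta(h'') = T(\mu_0 * \delta(h''))$ for every $T\in\mathbb{A}(\mu_0)$, in particular for every $T = S_W$.

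Finally I would set $h' = h'' - h_0$ and verify the claim directly: on one hand $\mu * \delta(h') = \mu_0 * \delta(h'')$, while on the other hand, using $S(W)\subset W$ and $h''\in W$, one has $S(\mu * \delta(h')) = S_W\mu_0 * \delta(Sh'') = S_W(\mu_0 * \delta(h''))$, which equals $\mu_0 * \delta(h'')$ by the choice of $h''$; the two sides coincide for every $S\in\mathbb{A}(\mu)$. The main obstacle is precisely the passage between symmetries of $\mu$ and symmetries of $\mu_0$ on $W$: one must ensure that $S$ preserves $W$ (supplied by Proposition \ref{P3}) and that the accompanying translation $g_S$ lands in $W$, so that $S_W$ genuinely belongs to $\mathbb{A}(\mu_0)$. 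Once this bookkeeping is settled, Jurek's full-measure result does the rest.
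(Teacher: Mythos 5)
Your proposal is correct and follows essentially the same route as the paper: translate $\mu$ onto its supporting subspace $W$, use Proposition \ref{P3} to see that every symmetry preserves $W$ and that the accompanying shift lies in $W$ (so restrictions of symmetries are symmetries of the translated measure on $W$), apply Jurek's full-measure result on $W$, and translate back. The bookkeeping with $g_S = Sh_0 + h_S - h_0$ and the support comparison forcing $g_S\in W$ is sound and matches the paper's argument up to a sign convention in the choice of $h_0$.
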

\begin{proof}
Let $W = \ssupp(\mu)$. Choose $h_0$ such that the measure \linebreak
$\mu' = \mu * \delta(h_0)$ is concentrated (and full) on $W$. We have
\[
 \mathbb{A}(\mu') =\{S\in \End V: \mu' = S\mu' * \delta(h)\;
 \text{for some } h\in V\},
\]
and Proposition \ref{P3} yields that $\mu'$ is concentrated on $W$
which implies that the $h$ occurring in the definition of
$\mathbb{A}(\mu')$ must be in $W$. Consider $\mu'$ only on the
subspace $W$. Then, again by virtue of Proposition \ref{P3}, we have
\[
 \mathbb{A}(\mu'|_W) = \{S|_W: S\in \mathbb{A}(\mu')\}.
\]
Since $\mu'$ is full on $W$, we infer, on account of \cite{J2}, that
there exists $h''\in W$ such that for each $S\in\mathbb{A}(\mu')$
\[
 \mu' * \delta(h'') = (S|_W)(\mu' * \delta(h'')).
\]
Now
\[
 \mu' * \delta(h'') = \mu * \delta(h_0) * \delta(h'') =
 \mu * \delta(h_0 +h'')
\]
and
\begin{align*}
 (S|_W)(\mu' *\delta(h'')) &= S\mu' * \delta(S h'') =
 S(\mu *\delta(h_0)) * \delta(S h'')\\&= S(\mu * \delta(h_0 +h'')).
\end{align*}
Putting
\[
 h' = h_0 +h'',
\]
we obtain thus
\[
 \mu *\delta(h') = S(\mu * \delta(h'))
\]
for each $S\in\mathbb{A}(\mu')$, and since clearly $\mathbb{A}(\mu')
=\mathbb{A}(\mu)$, the conclusion follows.
\end{proof}

Now we shall find the form of a universal centering for any
infinitely divisible measure. Let $\mu = [m, D, M]$ be such a
measure, and assume first that $\mu$ is full, $m=0$, and
$\mathbb{A}(\mu)$ is a subgroup of the orthogonal group $\mathbb{O}$
in $V$. For each $S\in\mathbb{A}(\mu)$ we then have $S\mu = [m',
SDS^*, SM] = [m', D, M]$ where, by virtue of \eqref{1},
\[
 m' = \int_{V_0}\frac{\|u\|^2-\|Su\|^2}{(1+ \|Su\|^2)
 (1 +\|u\|^2)}Su\,M(du) =0,
\]
since $S$ is an isometry. Thus $S\mu =\mu$, i.e. any measure $\mu =
[0, D, M]$ having the property $\mathbb{A}(\mu) \subset\mathbb{O}$ is
itself universally centered. Now let us assume only that $\mu =[m, D,
M]$ is full. Since $\mathbb{A}(\mu)$ is compact, there exists an
invertible operator $T$ on $V$ such that
\begin{equation}\label{3}
T\mathbb{A}(\mu) T^{-1}\subset\Bbb O.
\end{equation}
It is easily seen that
\begin{equation}\label{4}
 T\mathbb{A}(\mu) T^{-1} =\mathbb{A}(T\mu).
\end{equation}
The measure $T\mu$ has the form $T\mu = [m', TDT^*, TM]$ with
\begin{equation}\label{5}
 m' = Tm + \int_{V_0}\frac{\|u\|^2-\|Tu\|^2}{(1+\|T u\|^2)(1+\|u\|^2)}Tu\,M(du).
\end{equation}
According to the first part of our considerations, $-m'$ is a
universal centering for $T\mu$. Let $S\in\mathbb{A}(\mu)$. Then
$TST^{-1}\in\mathbb{A}(T \mu)$, and we have
\[
 T \mu *\delta(-m') = TST^{-1}(T\mu * \delta(-m')) =
 TS(\mu * \delta(-T^{-1} m')),
\]
which yields the equality
\[
 \mu *\delta(-T^{-1} m') = S(\mu * \delta(-T^{-1} m')),
\]
meaning that $h' = T^{-1} m'$ is a universal centering for $\mu$.
From \eqref{5} we get the formula
\[
 h' = -\bigg(m+ \int_{V_0}\frac{\|u\|^2-\|Tu\|^2}{(1+\|T u\|^2)
 (1+\|u\|^2)}\,M(du)\bigg).
\]

Finally, let $\mu = [m, D, M]$ be an arbitrary infinitely divisible
measure on $V$. Put $W = \ssupp(\mu)$, and let $h_0\in V$ be such
that $\mu' = \mu *\delta(h_0) = [m+h_0, D, M]$ is full on $W$. The
preceding discussion applied to the measure $\mu'$ on $W$ shows that
a universal centering $h''$ for $\mu'$ has the form
\[
 h'' = -\bigg(m +h_0 +\int_{W_0}\frac{\|u\|^2-\|Tu\|^2}{(1+\| Tu\|^2)
 (1+\|u\|^2)}u\,M(du)\bigg),
\]
where $T$ is an invertible operator on $W$ such that $T \mathbb
{A}(\mu')T^{-1}$ is a subgroup of the isometries on $W$. It is clear
that $h' = h'' + h_0$ is a universal centering for $\mu$, so for this
centering we have the formula
\[
 h' = -\bigg(m + \int_{W_0}\frac{\|u\|^2-\|Tu\|^2}{(1 +\|T u\|^2)
 (1+\|u\|^2)}u\,M(du)\bigg),
\]
where $W = \ssupp(\mu)$, and $T$ is an invertible operator on $W$
such that $T(\mathbb{A}(\mu)|\,W)T^{-1}$ is a subgroup of the
isometries on $W$.

\section{Centering problem for quasi-decomposable measures}
For an infinitely divisible measure $\mu$ on $V$ and $a >0$ put,
following \cite{Sh} (cf. also \cite[p. 187]{JM}),
\[
 G_a(\mu) = \{A\in\End V: \mu^a = A\mu * \delta(h)\quad\text{for some }
 h\in V\}.
\]
We recall that $\mu$ is quasi-decomposable if $G_a(\mu)\ne\emptyset$
for some $a\ne 1$. In this section, given a quasi-decomposable
measure $\mu$, we aim at finding conditions for the existence of an
$\hat{h}\in V$ such that for any $a >0$ with $G_a(\mu)\ne\emptyset$
and any $A\in G_a(\mu)$ the following equality holds
\begin{equation}\label{6}
 (\mu * \delta(\hat{h}))^a = A(\mu * \delta(\hat{h})).
\end{equation}
$\mu$ is then said to have a \emph{universal quasi-decomposability
centering}.

So, let us assume that $G_a(\mu)\ne\emptyset$ for some $a\ne 1$,
i.e., that for $\mu$ equality \eqref{2} holds. Thus if we have
\eqref{6} with some $\hat{h}\in V$, then
\[
 \mu^a *\delta(a \hat{h}) = A\mu * \delta(A \hat{h}),
\]
yielding, by \eqref{2}, the equality
\[
 A\mu * \delta(h_{a, A} + a\hat{h}) = A\mu *\delta(A\hat{h}),
\]
which means that
\begin{equation}\label{7}
 h_{a, A} = A\hat{h} - a\hat{h}.
\end{equation}
On the other hand, it is immediately seen that \eqref{7} implies
\eqref{6} under the assumption of the $(a, A)$-quasi-decomposability
of $\mu$, so for such $\mu$ we have equivalence of \eqref{6} and
\eqref{7}. Thus our task consists in finding conditions for the
existence of a solution $\hat{h}$ of equation \eqref{7} and showing
that this solution is independent of $a$ and $A$.

First we address the problem of the universality of centering. This
will be performed in two steps. In the first one we shall show that
if, for a given $a$ for which formula \eqref{2} holds, there is an
$\hat{h}_0$ satisfying \eqref{6} (or \eqref{7}) for some $A_0\in
G_a(\mu)$, then there is an $\hat{h}$ satisfying \eqref{6} for all
$A\in G_a(\mu)$. In the second step, we prove that the existence of
centering for some $a$ yields the existence of centering for all the
$a$ that can occur in formula \eqref{2}, thus that this centering is
universal.

In the first part of our considerations we may assume, in view of
Proposition \ref{P3} and the obvious fact that the existence of
universal quasi-decomposability centering is not affected by shifts,
that $\mu$ is full. We then have

\begin{lemma}\label{L5}
Assume that $\mu$ is full. Then for any $A\in G_a(\mu)$ the mappings
$G_a(\mu)\ni B\mapsto B^{-1}A$ and $G_a(\mu)\ni B\mapsto AB^{-1}$ are
bijections from $G_a(\mu)$ onto $\mathbb{A}(\mu)$.
\end{lemma}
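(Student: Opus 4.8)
The plan is to show that $G_a(\mu)$ is simultaneously a left and a right coset of the group $\mathbb{A}(\mu)$; once this is known, both maps are just the coset parametrizations $S\mapsto AS$ and $S\mapsto SA$ read backwards, and bijectivity onto $\mathbb{A}(\mu)$ is immediate.

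First I would record that, $\mu$ being full, every $A\in G_a(\mu)$ is invertible: from \eqref{2} we have $A\mu = \mu^a * \delta(-h_{a,A})$, and since $\mu^a$ is full its image $A\mu$ is full, forcing $A(V)=V$; this makes $B^{-1}$ meaningful throughout. Next come two elementary inclusions obtained by direct computation. For $A,B\in G_a(\mu)$, comparing the two instances of \eqref{2} yields $A\mu = B\mu * \delta(h_{a,B}-h_{a,A})$; applying $B^{-1}$ gives $B^{-1}A\mu = \mu * \delta(\cdot)$, so $B^{-1}A\in\mathbb{A}(\mu)$. Conversely, for $T\in\mathbb{A}(\mu)$ one writes $T\mu=\mu*\delta(-h_T)$ and computes $AT\mu = A\mu*\delta(-Ah_T) = \mu^a*\delta(\cdot)$, so $AT\in G_a(\mu)$. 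Fixing $A$, the first inclusion reads $G_a(\mu)\subseteq A\mathbb{A}(\mu)$ and the second reads $A\mathbb{A}(\mu)\subseteq G_a(\mu)$; hence $G_a(\mu)=A\mathbb{A}(\mu)$.

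The crux, and the step I expect to be the main obstacle, is to upgrade this to the two-sided statement $G_a(\mu)=A\mathbb{A}(\mu)=\mathbb{A}(\mu)A$, equivalently to show that $A$ normalizes $\mathbb{A}(\mu)$, i.e. $A\mathbb{A}(\mu)A^{-1}=\mathbb{A}(\mu)$. The difficulty is that $AB^{-1}\in\mathbb{A}(\mu)$ cannot be reached by the one-line manipulation that handled $B^{-1}A$, since isolating $B^{-1}\mu$ from \eqref{2} inevitably brings in a convolution $a$-th root rather than $\mu$ itself. To circumvent this I would combine \eqref{4} with the identity $\mathbb{A}(\mu^a)=\mathbb{A}(\mu)$. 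Indeed \eqref{4} gives $A\mathbb{A}(\mu)A^{-1}=\mathbb{A}(A\mu)$; as translation by a point mass leaves the symmetry group unchanged, $\mathbb{A}(A\mu)=\mathbb{A}(A\mu*\delta(h_{a,A}))=\mathbb{A}(\mu^a)$ by \eqref{2}; and for $\mu=[m,D,M]$ one has $\mu^a=[am,aD,aM]$, so that the conditions $SDS^*=D$, $SM=M$ characterizing membership in $\mathbb{A}(\mu)$ (the drift being freely matched by the admissible shift, via \eqref{1}) coincide with those for $\mathbb{A}(\mu^a)$. Hence $\mathbb{A}(\mu^a)=\mathbb{A}(\mu)$ and therefore $A\mathbb{A}(\mu)A^{-1}=\mathbb{A}(\mu)$.

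Finally I would assemble the conclusion. Normalization turns $G_a(\mu)=A\mathbb{A}(\mu)$ into $G_a(\mu)=A\mathbb{A}(\mu)=\mathbb{A}(\mu)A$. Writing an arbitrary $B\in G_a(\mu)$ as $B=AS$ with $S\in\mathbb{A}(\mu)$ gives $B^{-1}A=S^{-1}$, so $B\mapsto B^{-1}A$ ranges bijectively over $\mathbb{A}(\mu)$, with inverse $S\mapsto AS^{-1}$ (which lands in $G_a(\mu)$ by the right-coset description). Writing instead $B=SA$ gives $AB^{-1}=S^{-1}$, so $B\mapsto AB^{-1}$ is likewise a bijection onto $\mathbb{A}(\mu)$, with inverse $S\mapsto S^{-1}A$. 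Injectivity of both maps is immediate from the invertibility of $A$, which finishes the proof.
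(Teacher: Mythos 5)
Your proof is correct, and the first half follows essentially the paper's own route: the paper also shows $B^{-1}A\in\mathbb{A}(\mu)$ by a direct convolution computation (it goes through $B^{-1}\in G_{1/a}(\mu)$ where you compare $A\mu$ with $B\mu$ directly, a harmless variant) and obtains surjectivity from $AS^{-1}\in G_a(\mu)$, i.e.\ from $G_a(\mu)=A\mathbb{A}(\mu)$. Where you genuinely diverge is the second map: the paper dismisses $B\mapsto AB^{-1}$ with ``analogously,'' meaning the symmetric computation (from $B^{-1}\in G_{1/a}(\mu)$ and $A\mu^{1/a}=(A\mu)^{1/a}$ one gets $AB^{-1}\in\mathbb{A}(\mu)$, and $S^{-1}A\in G_a(\mu)$ gives surjectivity), whereas you instead prove that $A$ normalizes $\mathbb{A}(\mu)$ via $A\mathbb{A}(\mu)A^{-1}=\mathbb{A}(A\mu)=\mathbb{A}(\mu^a)=\mathbb{A}(\mu)$, the last step using uniqueness of the L\'evy--Khinchine triple and $\mu^a=[am,aD,aM]$. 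That step is legitimate here because $G_a(\mu)$ is only defined for infinitely divisible $\mu$, so the representation is available; be aware, though, that it makes your argument for the second bijection depend on infinite divisibility, while the ``analogous'' computation does not. What your route buys is the sharper structural statement that $G_a(\mu)$ is a two-sided coset, $G_a(\mu)=A\mathbb{A}(\mu)=\mathbb{A}(\mu)A$, which the paper never states but tacitly relies on later (e.g.\ $SA\in G_a(\mu)$ in Lemma 7 and $(SA^*)^*=AS^{-1}\in G_a(\mu)$ in Proposition 8). Your preliminary observation that fullness forces every $A\in G_a(\mu)$ to be invertible is also a worthwhile explicit check that the paper leaves to the reader.
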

\begin{proof}
It is easily seen that $B^{-1}\in G_{1/a}(\mu)$ for $B\in G_a(\mu)$,
and thus
\[
 \mu^{1/a} = B^{-1}\mu * \delta(h_{1/a, B^{-1}}),
\]
giving the equalities
\begin{align*}
 \mu&= B^{-1} \mu^a * \delta(ah_{1/a, B^{-1}}) = B^{-1}
 (A\mu * \delta(h_{a, A})) * \delta(a h_{1/a, B^{-1}})\\
 &= B^{-1} A\mu * \delta(B^{-1} h_{a, A} + ah_{1/a, B^{-1}}),
\end{align*}
which shows that $B^{-1} A\in\mathbb{A}(\mu)$. For any
$S\in\mathbb{A}(\mu)$, $A\in G_a(\mu)$, we have
$S^{-1}\in\mathbb{A}(\mu)$, so the operator $B = A S^{-1}$ belongs to
$G_a(\mu)$ and
\[
 S = B^{-1}A,
\]
showing that the mapping $B\mapsto B^{-1}A$ is onto
$\mathbb{A}(\mu)$. Since it is injective the conclusion follows.
Analogously we deal with the case of the mapping $B\mapsto AB^{-1}$.
\end{proof}

The above mentioned fact that the existence of universal
quasi-decomposability centering is not affected by shifts allows us
to assume further that $\mu$ is universally centered with respect to
$\mathbb{A}(\mu)$. This assumption is made in the remainder of the
paper.

\begin{lemma}\label{L6}
For any $A, B\in G_a(\mu)$ we have $A\mu = B\mu$ and $h_{a, A} =
h_{a, B}$.
\end{lemma}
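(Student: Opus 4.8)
The plan is to read off both equalities directly from Lemma~\ref{L5} together with the standing assumption that $\mu$ is universally centered with respect to $\mathbb{A}(\mu)$. Under that assumption every $S\in\mathbb{A}(\mu)$ in fact satisfies $S\mu=\mu$ with no translation, since the defining relation $\mu*\delta(0)=S(\mu*\delta(0))$ reduces to $S\mu=\mu$. I will use this repeatedly, so the whole argument is essentially formal once the normalization is in place.

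First I would prove $A\mu=B\mu$. Since $A,B\in G_a(\mu)$ and $\mu$ is full, Lemma~\ref{L5} (with $A$ fixed and $B$ the variable) gives $B^{-1}A\in\mathbb{A}(\mu)$; in particular $B^{-1}$ exists. By the universal-centering assumption, $(B^{-1}A)\mu=\mu$. Applying $B$ to both sides and using the identity $B(C\mu)=(BC)\mu$ recorded in Section~1, I obtain $(BB^{-1}A)\mu=A\mu$ on the left and $B\mu$ on the right, whence $A\mu=B\mu$.

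It then remains to deduce $h_{a, A}=h_{a, B}$. By the definition of $G_a(\mu)$ we have $\mu^a=A\mu*\delta(h_{a, A})$ and $\mu^a=B\mu*\delta(h_{a, B})$; since $A\mu=B\mu$, this forces $A\mu*\delta(h_{a, A})=A\mu*\delta(h_{a, B})$. Passing to characteristic functions yields $\widehat{A\mu}(v)\,e^{i(h_{a, A}, v)}=\widehat{A\mu}(v)\,e^{i(h_{a, B}, v)}$ for all $v\in V$, and because $\widehat{A\mu}(0)=1$ the factor $\widehat{A\mu}(v)$ is nonzero in a neighborhood of the origin. The two exponentials therefore agree there, so $(h_{a, A}, v)=(h_{a, B}, v)$ for all small $v$, giving $h_{a, A}=h_{a, B}$.

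I do not anticipate a genuine obstacle; the only step that deserves a moment's care is the cancellation of $A\mu$ in the last display, which I prefer to carry out through characteristic functions rather than by invoking an abstract cancellation law for convolution. Everything else is a direct consequence of Lemma~\ref{L5} and the convention that the symmetries of the (universally centered) measure $\mu$ act without translation.
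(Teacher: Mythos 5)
Your proof is correct and follows essentially the same route as the paper: both arguments hinge on Lemma~\ref{L5} giving $B^{-1}A\in\mathbb{A}(\mu)$ and on the standing normalization that $\mu$ is universally centered, so that the translation term must vanish. The only cosmetic difference is that the paper extracts $A\mu=B\mu$ and $h_{a,A}=h_{a,B}$ simultaneously from the single identity $\mu=B^{-1}A\mu*\delta(B^{-1}(h_{a,A}-h_{a,B}))$, whereas you separate the two steps and make the cancellation explicit via characteristic functions.
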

\begin{proof}
The following equality holds
\[
 A\mu * \delta(h_{a, A}) = B\mu * \delta(h_{a, B}),
\]
which gives
\[
 B^{-1} A\mu * \delta(B^{-1} h_{a, A}) = \mu * \delta(B^{-1}h_{a, B}),
\]
that is
\[
 \mu = B^{-1} A\mu *\delta(B^{-1}(h_{a, A} - h_{a, B})).
\]
Since $B^{-1} A\in\mathbb{A}(\mu)$ and $\mu$ is universally centered
with respect to $\mathbb{A}(\mu)$, we get
\[
 B^{-1}(h_{a, A} - h_{a, B}) =0,
\]
consequently, $h_{a, A} = h_{a, B}$ and $A\mu = B\mu$.
\end{proof}

The lemma above says that, with $\mu$ universally centered with
respect to $\mathbb{A}(\mu)$, we have the equality
\[
 \mu^a = A\mu * \delta(h_a),\qquad A\in G_a(\mu),
\]
with the same $h_a$ for all $A\in G_a(\mu)$. This yields an important
property of the $h_a$.

\begin{lemma}\label{L7}
For each $S\in\mathbb{A}(\mu)$, we have $Sh_a = h_a$.
\end{lemma}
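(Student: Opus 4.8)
The plan is to exploit the standing assumption that $\mu$ is universally centered with respect to $\mathbb{A}(\mu)$, which by the discussion of Section~2 means precisely that $S\mu = \mu$ for every $S\in\mathbb{A}(\mu)$ (the centering vector has been absorbed, i.e. may be taken to be $0$; this is exactly the fact already used in the proof of Lemma~\ref{L6}). Together with Lemma~\ref{L6}, which provides a single shift $h_a$ valid for \emph{all} $A\in G_a(\mu)$, this should reduce the claim to the following observation: applying a symmetry $S$ to the defining relation $\mu^a = A\mu * \delta(h_a)$ produces another member of $G_a(\mu)$ whose associated shift is simultaneously $Sh_a$ and $h_a$.

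Concretely, I would fix $A\in G_a(\mu)$ and $S\in\mathbb{A}(\mu)$ and begin from $\mu^a = A\mu * \delta(h_a)$. Applying the linear operator $S$, using that $S$ distributes over convolution and that $S(\mu^a) = (S\mu)^a = \mu^a$ (the first equality being the general identity for the convolution power, the second following from $S\mu=\mu$, as is transparent on characteristic functions via $\widehat{S\mu}(u)=\hat\mu(S^*u)$), I obtain $\mu^a = (SA)\mu * \delta(Sh_a)$. This single identity does two things at once: it exhibits $SA$ as an element of $G_a(\mu)$, and it identifies the corresponding shift as $h_{a,SA} = Sh_a$.

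It then remains only to invoke Lemma~\ref{L6}: since $A$ and $SA$ both belong to $G_a(\mu)$, they carry the same shift, so $h_{a,SA} = h_{a,A} = h_a$. Comparing the two expressions for this shift gives $Sh_a = h_a$, which is the assertion. The only point needing a little care is the uniqueness of the shift in the quasi-decomposability relation, namely that $\nu * \delta(h') = \nu * \delta(h'')$ forces $h'=h''$; this is the standard shift-cancellation for probability measures (near $u=0$ the characteristic function equals $1$, so $e^{i(h'-h'',u)}=1$ in a neighborhood of the origin, forcing $h'=h''$), and it is in any case implicit in the very statement of Lemma~\ref{L6}. I expect the main obstacle to be purely bookkeeping: tracking the invertibility of $SA$ (guaranteed by fullness, as in Lemma~\ref{L5}) and confirming that the step $S(\mu^a)=(S\mu)^a$ is legitimate for the non-integer convolution power of an infinitely divisible measure. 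Both become routine once phrased through characteristic functions, so no genuinely hard step is anticipated.
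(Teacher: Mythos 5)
Your proof is correct and follows essentially the same route as the paper: apply $S$ to $\mu^a = A\mu*\delta(h_a)$, use $S\mu=\mu$ (the standing universal-centering assumption) to obtain $\mu^a = SA\mu*\delta(Sh_a)$, observe that this exhibits $SA\in G_a(\mu)$, and conclude $Sh_a=h_a$ from the uniqueness of the shift given by Lemma~\ref{L6}. (Only a cosmetic slip: in your cancellation argument the characteristic function is \emph{nonzero}, not identically equal to $1$, near the origin, which is all that is needed.)
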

\begin{proof}
We have, for $S\in\mathbb{A}(\mu)$,
\[
 \mu^a = S\mu^a = S A \mu * \delta(S h_a),
\]
moreover, since by Lemma \ref{5}, $S A\in G_a(\mu)$, it follows that
\[
 \mu^a = S A\mu *\delta(h_a),
\]
which proves the claim.
\end{proof}

Finally, let us make our last simplification. For $T\in\Aut V$ we
clearly have
\[
 G_a(T\mu) = TG_a(\mu)T^{-1},
\]
so $\mu$ is $(a, A)$-quasi-decomposable if and only if $T\mu$ is $(a,
TAT^{-1})$-quasi-decomposable; moreover, equality \eqref{6} is
equivalent to the equality
\[
 (T\mu * \delta(T \hat{h}))^a = TAT^{-1}(T \mu *\delta(T\hat{h})).
\]
Therefore $\hat{h}$ is a universal quasi-decomposability centering of
$\mu$ if and only if $T\hat{h}$ is a universal quasi-decomposability
centering of $T\mu$. Now taking $T$ such that \eqref{3} and \eqref{4}
hold, the above considerations allow us to assume that
$\mathbb{A}(\mu)\subset\mathbb{O}$. Let
\[
 W =\{v: Sv = v, \quad S\in\mathbb{A}(\mu)\}
\]
be the fixed-point space for $\mathbb{A}(\mu)$, and let $P$ be the
orthogonal projection onto $W$.

\begin{proposition}\label{P8}
For each $A\in G_a(\mu)$, we have $AP = PA$.
\end{proposition}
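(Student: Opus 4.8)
The plan is to show that $A$ leaves both the fixed-point space $W$ and its orthogonal complement $W^{\perp}$ invariant; since $P$ is the orthogonal projection onto $W$, this is exactly the statement $AP = PA$. Note that both invariances are genuinely needed: if one only knows $A(W)\subseteq W$, then with respect to the decomposition $V = W\oplus W^{\perp}$ the operator $A$ is block upper-triangular, and $AP$ and $PA$ agree precisely when the off-diagonal block vanishes, i.e. when $A(W^{\perp})\subseteq W^{\perp}$ as well. Throughout I use that $A$ is invertible: since $\mu$ is full so is $\mu^{a}$, whereas a singular $A$ would make $A\mu*\delta(h_{a,A})$ concentrated on a proper subspace.

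The heart of the argument is to show that $A$ \emph{normalizes} $\mathbb{A}(\mu)$, that is $A\mathbb{A}(\mu)A^{-1} = \mathbb{A}(\mu)$. First I would record that $SA, AS\in G_a(\mu)$ for every $S\in\mathbb{A}(\mu)$. Indeed, by Lemma \ref{L5} the map $B\mapsto B^{-1}A$ is onto $\mathbb{A}(\mu)$, and $B = AS^{-1}$ satisfies $B^{-1}A = S$, so $AS^{-1}\in G_a(\mu)$ and hence $AS\in G_a(\mu)$ for all $S$; similarly, $B\mapsto AB^{-1}$ being onto and $B = S^{-1}A$ giving $AB^{-1} = S$ yields $SA\in G_a(\mu)$. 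Feeding these back into the two bijections, I apply $B\mapsto B^{-1}A$ to $B = SA$ to get $(SA)^{-1}A = A^{-1}S^{-1}A\in\mathbb{A}(\mu)$, and $B\mapsto AB^{-1}$ to $B = AS$ to get $A(AS)^{-1} = AS^{-1}A^{-1}\in\mathbb{A}(\mu)$. As $S$ ranges over the group $\mathbb{A}(\mu)$, these say $A^{-1}SA\in\mathbb{A}(\mu)$ and $ASA^{-1}\in\mathbb{A}(\mu)$ for all $S$, whence $A\mathbb{A}(\mu)A^{-1} = \mathbb{A}(\mu)$.

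Normalization then gives $A(W)\subseteq W$ directly: for $v\in W$ and $S\in\mathbb{A}(\mu)$ one has $S(Av) = A(A^{-1}SA)v = Av$, since $A^{-1}SA\in\mathbb{A}(\mu)$ fixes $v$. For the complement I would pass to adjoints. Because $\mathbb{A}(\mu)\subseteq\mathbb{O}$ is a group, it is closed under $S\mapsto S^{-1} = S^{*}$; taking adjoints in $ASA^{-1}\in\mathbb{A}(\mu)$ therefore shows $(A^{*})^{-1}\mathbb{A}(\mu)A^{*} = \mathbb{A}(\mu)$, so $A^{*}$ normalizes $\mathbb{A}(\mu)$ as well. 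The same fixed-point computation applied to $A^{*}$ gives $A^{*}(W)\subseteq W$, which is equivalent to $A(W^{\perp})\subseteq W^{\perp}$. Combining the two invariances yields $AP = PA$.

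I expect the main obstacle to be the normalization step: the two bijections of Lemma \ref{L5} have to be combined in just the right way, and one must first observe that $SA$ and $AS$ themselves lie in $G_a(\mu)$ before reinserting them into those bijections. Once normalization is in hand, the fixed-space identity and the passage to $W^{\perp}$ via adjoints — the latter relying essentially on $\mathbb{A}(\mu)\subseteq\mathbb{O}$, so that adjoint equals inverse inside the group — are routine.
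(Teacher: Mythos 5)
Your proof is correct and follows essentially the same route as the paper: both arguments reduce $AP=PA$ to the two invariances $A(W)\subseteq W$ and $A^*(W)\subseteq W$, obtain the first from the fact (via Lemma \ref{L5}) that $SA$ and $AS$ lie in $G_a(\mu)$, and obtain the second by taking adjoints and using $S^*=S^{-1}$ for $S\in\mathbb{A}(\mu)\subseteq\mathbb{O}$. Your packaging of the intermediate step as ``$A$ and $A^*$ normalize $\mathbb{A}(\mu)$'' is a slightly cleaner organization of the same computations the paper performs directly on fixed vectors of $W$.
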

\begin{proof}
Take arbitrary $A, B\in G_a(\mu)$. Since $B^{-1}A\in
\mathbb{A}(\mu)$, we get for each $v\in W$
\[
 B^{-1} Av =v,
\]
giving
\begin{equation}\label{8}
 Av = Bv.
\end{equation}
For any $S\in\mathbb{A}(\mu)$ we have $SA\in G_a(\mu)$, thus if $v\in
W$, then
\[
 S Av = Av,
\]
that is
\[
 A(W) \subset W,
\]
or, equivalently,
\begin{equation}\label{9}
 PAP = AP.
\end{equation}

Now put $S = AB^{-1}$. Then $S\in \mathbb{A}(\mu)$, and since
$\mathbb{A}(\mu)$ is a subgroup of the orthogonal group, we get
\[
 S^{-1} = S^* = B^{*-1} A^*\in\mathbb{A}(\mu),
\]
which, as in the first part of the proof, yields
\[
 A^*v = B^*v,\quad v\in W.
\]
For any $S\in\mathbb{A}(\mu)$, we have
\[
 (S A^*)^* = AS^* = AS^{-1}\in G_a(\mu),
\]
and hence
\[
 S A^*v = A^*v\quad v\in W,
\]
giving the equality
\[
 PA^*P = A^*P.
\]
Upon taking adjoints, we obtain
\[
 PAP = PA,
\]
which, together with \eqref{9}, gives the desired result.
\end{proof}

Now we are in a position to prove the universality of centering with
respect to $G_a(\mu)$, under the assumption of the existence of a
centering for an operator from $G_a(\mu).$

\begin{proposition}\label{P9}
Assume that for some $A_0\in G_a(\mu)$ there is an $\hat{h}_0$ such
that
\begin{equation}\label{10}
 (\mu * \delta(\hat{h}_0))^a = A_0(\mu * \delta(\hat{h}_0)).
\end{equation}
Then there exists $\hat{h}$ such that for all $A\in G_a(\mu)$
equality \eqref{6} holds. Moreover, $\hat{h}$ is also a universal
centering with respect to $\mathbb{A}(\mu).$
\end{proposition}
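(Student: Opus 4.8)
The plan is to exploit the equivalence of \eqref{6} and \eqref{7}, together with the structural facts assembled in Lemmas \ref{L6} and \ref{L7} and Proposition \ref{P8}, and to produce the candidate centering simply by projecting $\hat{h}_0$ onto the fixed-point space $W$. First I would translate the hypothesis \eqref{10} into the language of \eqref{7}: since \eqref{10} is exactly \eqref{6} for the single operator $A_0$, it is equivalent to $h_{a, A_0} = A_0\hat{h}_0 - a\hat{h}_0 = (A_0 - aI)\hat{h}_0$. By Lemma \ref{L6} the displacement $h_{a, A_0}$ equals the common value $h_a$ shared by all operators of $G_a(\mu)$, so the hypothesis reads $h_a = (A_0 - aI)\hat{h}_0$.

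Next I would set $\hat{h} = P\hat{h}_0$ and verify \eqref{7} for every $A\in G_a(\mu)$. Applying $P$ to the displayed identity and using $PA_0 = A_0P$ from Proposition \ref{P8}, together with $Ph_a = h_a$ (valid because Lemma \ref{L7} places $h_a$ in the fixed-point space $W$), I expect to obtain $h_a = (A_0 - aI)\hat{h}$. To pass from $A_0$ to an arbitrary $A\in G_a(\mu)$, I would invoke the pointwise agreement $Av = A_0v$ for $v\in W$ recorded as \eqref{8} in the proof of Proposition \ref{P8} (a consequence of $A_0^{-1}A\in\mathbb{A}(\mu)$ fixing $W$). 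Since $\hat{h}\in W$, this gives $A\hat{h} = A_0\hat{h}$, whence $(A - aI)\hat{h} = (A_0 - aI)\hat{h} = h_a = h_{a, A}$, which is precisely \eqref{7} for $A$; the equivalence of \eqref{6} and \eqref{7} then yields \eqref{6} for all $A\in G_a(\mu)$.

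For the remaining assertion, that $\hat{h}$ is also a universal centering with respect to $\mathbb{A}(\mu)$, I would use that $\hat{h} = P\hat{h}_0\in W$ is fixed by every $S\in\mathbb{A}(\mu)$ and that, under the standing normalization ($\mathbb{A}(\mu)\subset\mathbb{O}$ and $\mu$ itself already universally centered, so $S\mu = \mu$), one has $S(\mu * \delta(\hat{h})) = S\mu * \delta(S\hat{h}) = \mu * \delta(\hat{h})$, as required. The only substantive step is the projection argument of the second paragraph: the interplay between the commutation relation $PA_0 = A_0P$ and the fact that all operators of $G_a(\mu)$ coincide on $W$. I expect this to be the crux, while the reductions at either end are formal manipulations of the equivalence \eqref{6} $\Leftrightarrow$ \eqref{7} and of the definition of centering.
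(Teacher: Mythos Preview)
Your argument is correct and follows the paper's proof essentially step for step: translate \eqref{10} into $h_a=(A_0-aI)\hat h_0$, project onto $W$ using Proposition~\ref{P8} and Lemma~\ref{L7} to get $\hat h=P\hat h_0$, and then use \eqref{8} to replace $A_0$ by an arbitrary $A\in G_a(\mu)$. The only cosmetic difference is in the last clause: the paper deduces the $\mathbb{A}(\mu)$-centering from the first part via Lemma~\ref{L5} (any $S\in\mathbb{A}(\mu)$ has the form $B^{-1}A$ with $A,B\in G_a(\mu)$, and both satisfy \eqref{6}), whereas you argue directly from $S\hat h=\hat h$ and the standing assumption $S\mu=\mu$; both are valid.
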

\begin{proof}
As we have shown before, equality \eqref{10} is equivalent to the
equality
\[
 h_a = A_0\hat{h}_0 - a\hat{h}_0
\]
and as $h_a\in W$ by Lemma \ref{7}, we get
\[
 h_a = P h_a = P A_0 \hat{h}_0 - a P\hat{h}_0
 =A_0 P\hat{h}_0 - a P \hat{h}_0.
\]
Putting
\[
 \hat{h} = P \hat{h}_0,
\]
we obtain
\[
 h_a = A_0\hat{h} - a\hat{h},
\]
moreover, since $\hat h\in W$, we have by \eqref{8}
\[
 A_0\hat{h} = A\hat{h}
\]
for all $A\in G_a(\mu)$, which leads to the equality
\[
 h_a = A\hat{h} - a\hat{h},\qquad A\in G_a(\mu),
\]
proving the first part of the claim. The second part follows from the
first and Lemma \ref{L5}.
\end{proof}

For our further analysis, it will be convenient to rewrite condition
\eqref{7} in a slightly different form. Let $T\in\End V$ and let
$\mathcal{N}(T)$ denote its null space, i.e.
\[
 \mathcal{N}(T) =\{v\in V: Tv =0\}.
\]
From elementary Hilbert space theory and the finite dimensionality of
$V$, we have the  following orthogonal decomposition
\begin{equation}\label{11}
 V = \mathcal{N}(T^*) \oplus T(V).
\end{equation}
Now condition \eqref{7} means simply that $h_{a, A}\in (A - aI)(V)$,
which by \eqref{11} is equivalent to
\begin{equation}\label{12}
 h_{a, A}\perp \mathcal{N}(A^* - aI),
\end{equation}
which is the form we shall employ.

Now we shall analyze the universality with respect to various $a$'s
that can occur in formula \eqref{2}. According to \cite[Theorem
3.2]{L2} there are two possibilities: either
\begin{enumerate}
 \item[(i)] $a = c^n$ for a unique $0 < c <1$ and some integer $n$,
\end{enumerate}
or
\begin{enumerate}
 \item[(ii)] $a$ may be an arbitrary positive real number, in which
 case for $\mu$ the following formula holds
\begin{equation}\label{13}
 \mu^t = t^B \mu * \delta(h_{t, B}),\qquad t >0
\end{equation}
\end{enumerate}
for some $B \in\End V$ and $t^B$ defined as $t^B = e^{(\log t)B}$,
that is, $\mu$ is operator-stable. We shall call these two cases
discrete and continuous, respectively, and shall deal with them
separately.

\emph{Discrete case}. According to our previous considerations we may
assume that $\mu$ is centered universally with respect to
$\mathbb{A}(\mu)$. Then
\[
 \mu^c = A\mu * \delta(h_c)\quad\text{for each}\quad A\in G_c(\mu),
\]
and iterating the equality above, we obtain
\[
 \mu^{c^n} = A^n \mu * \delta(c^{n-1} h_c + c^{n-2} Ah_c +\dots + A^{n-1}h_c)
\]
and
\[
 \mu^{(1/c)^n} = A^{-n} \mu *\delta((1/c)^{n-1} h_{1/c} +
 (1/c)^{n-2} A^{-1} h_{1/c} +\dots + (A^{-1})^{n-1} h_{1/c})
\]
for all positive integers $n$. Denoting
\[
 h_n= h_{c^n},\quad p_n(b, A)= b^{n-1}I + b^{n-2}A +\dots
 + A^{n-1},\quad n=1,2,\dots,
\]
we get the formulas
\[
 h_n = p_n(c, A)h_1,\quad h_{-n} =p_n(c^{-1}, A^{-1})h_{-1},
 \qquad n =1,2,\dots;
\]
moreover, it is immediately seen that
\begin{equation}\label{14}
 h_1 =-c Ah_{-1}.
\end{equation}

Now we are in a position to set the problem of the universality of
centering together with an important point on its existence.

\begin{proposition}\label{P10}
There exists a universal quasi-decomposability centering for $\mu$ if
and only if for some integer $n$ and $A_n\in G_{c^n}(\mu)$ there
exists a centering of $\mu$ with respect to the pair $(c^n, A_n)$.
\end{proposition}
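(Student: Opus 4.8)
The plan is to prove the two implications separately, the forward one being immediate and the reverse one carrying all the content. If $\mu$ admits a universal quasi-decomposability centering $\hat h$, then by definition \eqref{6} holds for \emph{every} admissible pair, in particular for any single pair $(c^n, A_n)$, so the forward implication is free. For the converse, recall that by the equivalence of \eqref{6} and \eqref{7}, a centering of $\mu$ with respect to $(c^n, A_n)$ exists precisely when $h_n \in (A_n - c^n I)(V)$, and that by Lemma \ref{L6} the vector $h_n$ does not depend on the choice of operator in $G_{c^n}(\mu)$. Fixing once and for all an $A\in G_c(\mu)$, so that $A^n\in G_{c^n}(\mu)$ for every $n$, the whole problem is reduced to solving, in a single unknown $\hat h$, the family of membership conditions $h_n \in (A^n - c^n I)(V)$; by Proposition \ref{P9} any solution at a given level may be taken in $W$ and then automatically serves all operators in $G_{c^n}(\mu)$.

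First I would show that a solution at the single level $n=1$ already yields a universal centering. The key is the telescoping identity $p_n(c, A)(A - cI) = A^n - c^n I$. Hence if $\hat h$ satisfies $h_1 = (A - cI)\hat h$, then for every $n\geq 1$,
\[
 h_n = p_n(c, A)\,h_1 = p_n(c, A)(A - cI)\hat h = (A^n - c^n I)\hat h,
\]
which is exactly \eqref{7} for the pair $(c^n, A^n)$; the analogous computation based on \eqref{14}, with $c$ and $A$ replaced by $c^{-1}$ and $A^{-1}$, disposes of the negative powers, and Proposition \ref{P9} then extends each level to all of $G_{c^n}(\mu)$. Thus the existence of a universal centering is equivalent to the solvability of the \emph{single} condition $h_1 \in (A - cI)(V)$.

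It remains to pass from the hypothesis $h_{n_0} \in (A^{n_0} - c^{n_0}I)(V)$, available for one integer $n_0$, to this level-one condition. This is the step I expect to be the main obstacle, because $p_{n_0}(c, A)$ need not be invertible: its kernel is nontrivial exactly when $A$ has an eigenvalue $c\zeta$ with $\zeta^{n_0}=1$, $\zeta\neq 1$, so one cannot simply cancel $p_{n_0}(c, A)$ from $h_{n_0} = p_{n_0}(c, A)\,h_1$. To get around this I would localize at the eigenvalue $c$, decomposing $V = V_c \oplus V'$ into the generalized eigenspace of $A$ for the eigenvalue $c$ and the $A$-invariant sum of the remaining generalized eigenspaces. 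On $V'$ both $A - cI$ and $A^{n_0} - c^{n_0}I$ are invertible, so the components there impose no constraint at either level; the only binding requirement is that the component in $V_c$ lie in $N(V_c)$, where $N = (A - cI)|_{V_c}$ is nilpotent. Writing $A|_{V_c} = cI + N$, one checks that both $A^{n_0} - c^{n_0}I$ and $p_{n_0}(c, A)$ restrict on $V_c$ to $N$ times an invertible operator commuting with $N$; in particular $(A^{n_0} - c^{n_0}I)(V_c) = N(V_c)$, and $p_{n_0}(c, A)$ carries $N(V_c)$ bijectively onto itself. Since $h_{n_0} = p_{n_0}(c, A)\,h_1$, the memberships of the $V_c$-components of $h_{n_0}$ and of $h_1$ in $N(V_c)$ are therefore equivalent; the first is implied by the hypothesis and the second is precisely $h_1 \in (A - cI)(V)$. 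Combined with the reduction of the preceding paragraph, this yields the converse and hence the proposition.
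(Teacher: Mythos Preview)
Your overall strategy is sound and reaches the same endpoint as the paper: reduce everything to the single condition $h_1 \in (A - cI)(V)$, then propagate via the identity $p_n(c,A)(A - cI) = A^n - c^n I$ (together with \eqref{14} for negative powers) and invoke Proposition~\ref{P9} and Lemma~\ref{L5}. The difference lies entirely in how you pass from level $n_0$ down to level~$1$. You localize via the primary decomposition $V = V_c \oplus V'$ and analyze the nilpotent part; the paper instead stays with the orthogonality reformulation \eqref{12}, noting that for $v \in \mathcal{N}(A^* - cI)$ one has $p_{n_0}(c,A^*)v = n_0 c^{\,n_0 - 1} v$, whence
\[
(h_{n_0}, v) = (p_{n_0}(c,A)h_1,\,v) = (h_1,\,p_{n_0}(c,A^*)v) = n_0 c^{\,n_0-1}(h_1, v),
\]
and combines this with the trivial inclusion $\mathcal{N}(A^* - cI) \subset \mathcal{N}(A^{*n_0} - c^{n_0}I)$. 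The paper's route is a one-line adjoint computation that avoids any structure theory of $A$; your generalized-eigenspace argument is longer but makes transparent exactly where the obstruction lives (the nilpotent block at the eigenvalue $c$).

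Two small slips in your write-up, neither fatal. First, $A^{n_0} - c^{n_0} I$ need \emph{not} be invertible on $V'$: any eigenvalue $c\zeta$ with $\zeta^{n_0} = 1$, $\zeta \neq 1$ --- the very phenomenon you flagged a sentence earlier --- sits in $V'$ and obstructs this. Fortunately your argument only requires that $A - cI$ be invertible on $V'$, which \emph{is} true, so the conclusion that level~$1$ imposes no constraint on the $V'$-component stands. Second, $p_{n_0}(c, A)|_{V_c}$ is not ``$N$ times an invertible operator''; it is itself invertible, since its constant term in $N$ is $n_0 c^{\,n_0-1}I$. It is precisely this invertibility, together with commutation with $N$, that makes it a bijection of $N(V_c)$ onto itself, as you correctly assert in the next clause. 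With these corrections your argument goes through.
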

\begin{proof}
Assume that $n$ is positive and that $\mu$ may be centered with
respect to the pair $(c^n, A_n)$ with some $A_n\in G_{c^n}(\mu)$. On
account of Proposition \ref{P9} we may assume that this centering is
universal with respect to the whole of $G_{c^n}(\mu)$. Take an
arbitrary $A\in G_c(\mu)$. Then $A^n\in G_{c^n}(\mu)$ and since
\[
 \mu^{c^n} = A^n \mu * \delta(h_n),
\]
the existence of a centering for $(c^n, A_n)$ yields the condition
\[
 h_n\perp \mathcal{N}(A^{*n} - c^n I).
\]
For each $v\in\mathcal{N}(A^* - cI)$ we have
\[
 p_n(c, A)^*v = p_n(c, A^*)v = c^{n-1}v +c^{n-2}A^*v+\dots
 + A^{*n-1}v = n c^{n-1}v,
\]
and since
\[
 \mathcal{N}(A^* -cI)\subset \mathcal{N}(A^{*n} -c^n I),
\]
we get
\begin{align*}
 0 &=(h_n,v) = (p_n(c, A)h_1, v) = (h_1, p_n(c A)^*v)\\
 &=n c^{n-1}(h_1, v),\qquad v\in\mathcal{N}(A^* - cI),
\end{align*}
which means that
\begin{equation}\label{15}
 h_1\perp \mathcal{N}(A^* - cI).
\end{equation}

For $n$ negative, we would obtain, considering the pair $(c^{-1},
A^{-1})$ instead of $(c, A)$, the condition
\[
 h_{-1} \perp \mathcal{N}(A^{-1*}- c^{-1} I),
\]
which, by \eqref{14}, gives again condition \eqref{15}. But this
condition together with Proposition \ref{P9} say that there is a
centering $\hat{h}$  universal with respect to $G_a(\mu)$. Thus
\[
 (\mu * \delta(\hat{h}))^c = A(\mu * \delta(\hat{h}))\qquad
 \text{for each}\quad A\in G_a(\mu),
\]
and, consequently,
\[
 (\mu * \delta(\hat h))^{c^n} = A^n(\mu * \delta(\hat h))
 \quad n =0, \pm 1,\dots .
\]
For any $A_n\in G_{c^n}(\mu)$, we have $A_n = A^n S$ with some $S\in
\mathbb{A}(\mu)$, and since $\hat{h}$ is also universal with respect
to $\mathbb{A}(\mu)$, we have
\[
 A_n(\mu *\delta(\hat{h})) = A^n S(\mu *\delta(\hat{h}))
 = A^n(\mu *\delta(\hat{h})) = (\mu *\delta(\hat{h}))^{c^n},
\]
showing the universality of centering.
\end{proof}

\emph{Continuous case}. First, notice that formulas \eqref{1} and
\eqref{2} lead to the following equality for the shift for $\mu = [m,
D, M]$
\begin{equation}\label{16}
 h_{a, A} = am - Am - \int_{V_0}\frac{\|u\|^2-\|Au\|^2}{(1+\|A u\|^2)
 (1+\|u\|^2)}Au\,M(du),
\end{equation}
which  for $\mu$ satisfying \eqref{13} takes the form
\[
 h_{t, B} = tm - t^Bm - \int_{V_0}\frac{\|u\|^2-\|t^Bu\|^2}
 {(1 +\|t^B u\|^2)(1 +\|u\|^2)}t^Bu\,M(du).
\]
Put, for the sake of convenience,
\[
 f_B(t)=h_{e^t,B},\qquad t\in\mathbb{R}.
\]
Then we have for $f_B$
\begin{equation}\label{17}
 f_B(t) = e^tm - e^{tB}m + \int_{V_0} \frac{\|e^{tB}u\|^2
 - \|u\|^2}{(1+\|u\|^2)(1+\|e^{tB}u\|^2)}e^{tB}u\,M(du).
\end{equation}
For each fixed $u\in V$, consider the function
\[
 g(t) = \|e^{tB}u\|^2,\qquad t\in\mathbb{R}.
\]
We have
\begin{equation}\label{18}
 g'(t) = 2(B e^{tB} u, e^{tB}u),\qquad t\in\mathbb{R},
\end{equation}
and since $e^{tB} u\to u$ as $t\to 0$, we get for sufficiently small
$t$'s
\[
 \bigg|\frac{\|e^{tB}u\|^2 - \|u\|^2}{t}\bigg| \leq C\|B\|\|u\|^2
\]
which gives the following estimation
\[
 \bigg\|\frac{1}{t} \frac{\|e^{tB} u\|^2 - \|u\|^2}
 {(1+ \|u\|^2)(1+\|e^{tB} u\|^2)}u\bigg\| \leq
 \frac{C\|B\|\|u\|^3}{(1 + \|u\|^2)(1+ \frac{1}{2}\|u\|^2)}.
\]
But the function on the right-hand side is $M$-integrable, thus by
Le\-besgue's  theorem we may pass to the limit with $t\to 0$ under
the integral sign in the following expression
\begin{align*}
 \frac{f_B(t)}{t} &= -\frac{e^{tB} -e^tI}{t} m\\&+ \int_{V_0}
 \frac{\|e^{tB}u\|^2 - \|u\|^2}{t} \frac{e^{tB}u}{(1 +\|e^{tB} u\|^2)
 (1 + \|u\|^2)}\,M(du)
\end{align*}
and obtain, taking into account \eqref{18},
\begin{equation}\label{19}
 v_0 =\lim_{t\to 0} \frac{f_B(t)}{t} = (I -B)m +\int_{V_0}
 \frac{2(B u, u)}{(1+ \|u\|^2)^2}u\,M(du).
\end{equation}
Since $f_B(0) =0$, we have
\[
 v_0 = f'_B(0).
\]
Now, according to \cite[Formula (8.2), p. 64]{Sh} or \cite[Sec.4.9,
p. 236]{JM}, the function $h$ satisfies the following equation
\begin{equation}\label{20}
 h_{st, B} = t^B h_{s,B} + s h_{t, B},\qquad s,t >0,
\end{equation}
which implies that for $f_B$ we have
\begin{equation}\label{21}
 f_B(s +t) = e^{tB} f_B(s) + e^s f_B(t),\qquad s,t\in\mathbb {R}.
\end{equation}
In \cite{Sh} (cf. also \cite{JM}) equation (20) is solved in general,
under the assumption $1\notin\operatorname{sp}\,B$. We shall find the
form of the function $f_B$ without any restrictions on the spectrum
(however, it should be kept in mind that we do have the existence of
$f'_B(0)$ at our disposal).

\begin{lemma}\label{L11}
The function $f_B$ has the  form
\begin{equation}\label{22}
 f_B(t) = e^t \int_0^t e^{s(B-I)}v_0\,ds,
\end{equation}
where $v_0 = f'_B(0)$ is given by equality \eqref{19}.
\end{lemma}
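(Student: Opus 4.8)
The plan is to convert the functional equation \eqref{21} into a first-order linear differential equation for $f_B$ and then to solve that equation explicitly. The only structural input I allow myself is the existence of $f'_B(0) = v_0$ together with $f_B(0) = 0$; everything else will be extracted from \eqref{21}.

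First I would promote differentiability at $0$ to differentiability at every point. Fixing $s\in\mathbb{R}$ and rewriting \eqref{21} as
\[
 \frac{f_B(s+t) - f_B(s)}{t} = \frac{e^{tB} - I}{t}\,f_B(s) + e^s\,\frac{f_B(t) - f_B(0)}{t},
\]
I let $t\to 0$: the matrix quotient $(e^{tB} - I)/t$ tends to $B$ because $t\mapsto e^{tB}$ is smooth, while $(f_B(t) - f_B(0))/t\to v_0$ by the assumed differentiability at $0$. Hence the left-hand side converges, $f_B$ is differentiable at $s$, and
\[
 f'_B(s) = B f_B(s) + e^s v_0,\qquad f_B(0) = 0.
\]

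Next I would solve this equation by the integrating-factor method. Multiplying through by $e^{-sB}$ and using that $sI$ commutes with $sB$ gives
\[
 \frac{d}{ds}\bigl(e^{-sB} f_B(s)\bigr) = e^{-sB}e^s v_0 = e^{s(I-B)}v_0,
\]
so that, integrating from $0$ to $t$ and invoking $f_B(0) = 0$,
\[
 f_B(t) = e^{tB}\int_0^t e^{s(I-B)}v_0\,ds.
\]
To bring this into the stated shape \eqref{22}, I would use $e^{tB}e^{s(I-B)} = e^s e^{(t-s)B}$ (again by commutativity of the scalar and matrix exponentials) and substitute $u = t-s$, which yields
\[
 f_B(t) = \int_0^t e^s e^{(t-s)B}v_0\,ds = e^t\int_0^t e^{u(B-I)}v_0\,du,
\]
exactly \eqref{22}.

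I expect the only genuinely non-formal step to be the bootstrap from differentiability at $0$ to differentiability everywhere; once the differential equation is available, the rest is routine manipulation of commuting exponentials, with the change of variables $u = t-s$ the single point to watch. Should the bootstrap be regarded as delicate, an alternative is to let $g$ denote the right-hand side of \eqref{22}, verify directly that $g$ solves the same equation with $g(0)=0$, and conclude $f_B = g$ by uniqueness; but this route still presupposes that $f_B$ itself satisfies the differential equation, so the differentiability argument cannot be sidestepped.
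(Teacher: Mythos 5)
Your proof is correct, and it follows the paper's overall strategy: use the existence of $f'_B(0)=v_0$ and the initial condition $f_B(0)=0$ to convert the cocycle identity \eqref{21} into a first-order linear ODE, then integrate. The one substantive difference is which variable you send to zero. The paper fixes $t$ and lets the increment $s\to 0$ in $f_B(t+s)-f_B(t)=e^{tB}f_B(s)+(e^s-1)f_B(t)$, arriving at the scalar-coefficient equation $f'_B(t)=f_B(t)+e^{tB}v_0$, which it solves by quoting the standard variation-of-constants formula; you fix $s$ and let $t\to 0$, arriving at the companion matrix-coefficient equation $f'_B(s)=Bf_B(s)+e^{s}v_0$, which you solve with the integrating factor $e^{-sB}$ and then reshape via the substitution $u=t-s$. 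Both equations are indeed satisfied by $f_B$ (one can check directly that the right-hand side of \eqref{22} solves both), and both integrations land on \eqref{22}, so your route costs a little extra bookkeeping with commuting exponentials but buys nothing is lost; your explicit bootstrap of differentiability from $0$ to all of $\mathbb{R}$ is the same step the paper performs implicitly when it writes down $f'_B(t)$ for arbitrary $t$.
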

\begin{proof}
For each fixed $t$ and any $s$ we have
\[
 \frac{f_B(t+s) - f_B(t)}{s} = e^{tB} \frac{f_B(s)}{s}
 + \frac{e^s-1}{s} f_B(t),
\]
and passing to the limit with $s\to 0$ yields the equation
\begin{equation}\label{23}
 f'_B(t) = e^{tB} v_0 +f_B(t).
\end{equation}
It follows from e.g. \cite[Chapter 10, p. 169]{B} that the general
solution of \eqref{23} has the form
\[
 f_B(t) = e^t u_0 +\int_0^t e^{t-s} e^{sB}v_0\,ds,
\]
and taking into account our initial condition $f_B(0) =0$ we get
\eqref{22}.
\end{proof}

The next proposition sets the problem of the universality of
centering; it also adds an important point in the question of
existence.

\begin{proposition}\label{P12}
The following conditions are equivalent:
\begin{enumerate}
\item[(i)] $v_0 \perp \mathcal{N}(B^* - I)$;

\item[(ii)] there exists a universal centering;

\item[(iii)] there exists a centering for some $t' >0$,\, $t'\ne1$.
\end{enumerate}
\end{proposition}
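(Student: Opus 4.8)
The plan is to prove the cycle (i) $\Rightarrow$ (ii) $\Rightarrow$ (iii) $\Rightarrow$ (i). Of these, (ii) $\Rightarrow$ (iii) is immediate, since a universal centering in particular centers $\mu$ with respect to the pair $(e, e^{B})$, and $e\ne 1$. The content lies in the remaining two implications, and the whole argument rests on translating the centering condition \eqref{12} for the pair $(t, t^{B}) = (e^{\tau}, e^{\tau B})$ into an orthogonality statement for $f_B(\tau) = h_{e^{\tau}, B}$ against $\mathcal{N}(e^{\tau B^{*}} - e^{\tau} I)$, and then exploiting the explicit formula \eqref{22}.

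The computational heart is the following identity. If $v\in\mathcal{N}(B^{*} - I)$, then $(B^{*} - I)v = 0$, so $e^{s(B^{*} - I)}v = v$ for every $s$; pairing \eqref{22} against $v$ therefore gives
\[
 (f_B(\tau), v) = e^{\tau}\int_0^{\tau}(v_0, e^{s(B^{*} - I)}v)\,ds = \tau e^{\tau}(v_0, v).
\]
The same relation $B^{*}v = v$ yields $e^{\tau B^{*}}v = e^{\tau}v$, so that $\mathcal{N}(B^{*} - I)\subset\mathcal{N}(e^{\tau B^{*}} - e^{\tau}I)$ for every $\tau$. For (iii) $\Rightarrow$ (i): a centering for some $t' = e^{\tau'}$ with $\tau'\ne 0$ means, by \eqref{12}, that $f_B(\tau')\perp\mathcal{N}(e^{\tau' B^{*}} - e^{\tau'}I)$; restricting this orthogonality to the smaller subspace $\mathcal{N}(B^{*} - I)$ and inserting the displayed identity gives $\tau' e^{\tau'}(v_0, v) = 0$ for all such $v$, whence $(v_0, v) = 0$, which is exactly (i).

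For (i) $\Rightarrow$ (ii): condition (i) together with the orthogonal decomposition \eqref{11} applied to $T = B - I$ shows that $v_0\in (B - I)(V)$, so I may choose $w$ with $(B - I)w = v_0$. Writing $C = B - I$ and substituting $v_0 = Cw$ into \eqref{22}, the integrand telescopes:
\[
 f_B(\tau) = e^{\tau}\int_0^{\tau} e^{sC}Cw\,ds = e^{\tau}(e^{\tau C} - I)w = (e^{\tau B} - e^{\tau}I)w,
\]
which is precisely condition \eqref{7} for the pair $(e^{\tau}, e^{\tau B})$ with $\hat{h} = w$. Since this holds for every $\tau$, the vector $\hat{h} = w$ centers $\mu$ with respect to $(t, t^{B})$ for all $t > 0$; Proposition \ref{P9} then upgrades this to a centering universal over each $G_t(\mu)$, giving (ii).

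The main obstacle is that $B - I$ need not be invertible, so one cannot simply set $\hat{h} = (B - I)^{-1}v_0$. The role of hypothesis (i) is exactly to guarantee, through the decomposition \eqref{11}, that $v_0$ lies in the range of $B - I$, so that a solution $w$ of $(B - I)w = v_0$ exists at all; the telescoping identity $\int_0^{\tau} e^{sC}Cw\,ds = (e^{\tau C} - I)w$ is then what makes this single $w$ serve simultaneously for every $t$, turning an a priori $t$-dependent family of centering conditions into one linear equation.
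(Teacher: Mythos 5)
Your proposal is correct and follows essentially the same route as the paper: the same use of the decomposition \eqref{11} to write $v_0=(B-I)w$ and telescope the integral in \eqref{22} for (i)$\Rightarrow$(ii), and the same computation $(f_B(\tau),v)=\tau e^{\tau}(v_0,v)$ for $v\in\mathcal{N}(B^*-I)$ combined with the inclusion $\mathcal{N}(B^*-I)\subset\mathcal{N}(e^{\tau B^*}-e^{\tau}I)$ for (iii)$\Rightarrow$(i). Your explicit appeal to Proposition \ref{P9} to pass from the pairs $(t,t^B)$ to all of $G_t(\mu)$ is a small point the paper leaves implicit, and is a welcome precision.
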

\begin{proof}
(i) $\Longrightarrow$ (ii) By virtue of decomposition \eqref{11}, we
have
\[
 v_0 = (B - I) v_1\qquad\text{for some}\quad v_1\in V,
\]
and accordingly
\[
 f_B(t) = e^t \int_0^t e^{s(B-I)}(B - I)v_1\,ds.
\]
But the function $s\mapsto e^{s(B-I)}(B - I)v_1$ under the integral
sign is the derivative of the function $s\mapsto e^{s(B- I)}v_1$, so
we get
\[
 f_B(t) = e^t[e^{t(B- I)}v_1 -v_1] = e^{tB} v_1- e^tv_1,
\]
that is
\[
 h_{t, B} = t^B v_1 - t v_1,
\]
which means that $v_1$ is a universal centering.

(ii) $\Longrightarrow$ (iii) Obvious.

(iii) $\Longrightarrow$ (i) Consider the operator-valued function
\[
 H(t) = \int_0^t e^{s(B- I)}\,ds,\qquad t\in\mathbb{R}.
\]
Then
\[
 f_B(t) = e^t H(t) v_0,
\]
and for any $v\in \mathcal{N}(B^* - I)$, $s\in\mathbb{R}$,
\[
 e^{s(B^*-I)}v =v.
\]
Accordingly, for such $v$'s
\[
 H(t)^*v = \int_0^t e^{s(B^*-I)}v\,ds = tv,
\]
which gives

\begin{equation}\label{24}
 (f_B(t), v) = (e^t H(t)v_0, v) = e^t(v_0, H(t)^* v) = t e^t(v_0, v).
\end{equation}
Now let a centering for some $t' >0$, $t\ne 1$ be given. This means
that
\[
 h_{t', B} = t'^{B} v_1 -t' v_1
\]
for some $v_1\in V$, or with $t_0 = \log t' \ne 0,$
\[
 f_B(t_0) = e^{t_0B} v_1 - e^{t_0} v_1 = (e^{t_0 B} - e^{t_0} I) v_1.
\]
The last equality is, on account of decomposition \eqref{11},
equivalent to the relation
\[
 f_B(t_0) \perp \mathcal{N}(e^{t_0 B^*} - e^{t_0} I),
\]
and since
\[
 \mathcal{N}(B^* - I) \subset \mathcal{N}(e^{t_0 B^*} - e^{t_0} I),
\]
we get
\[
 f_B(t_0) \perp \mathcal{N}(B^* - I).
\]
Taking into account \eqref{24} we obtain for each
$v\in\mathcal{N}(B^* - I)$
\[
 0 = (f_B(t_0), v) = t_0 e^{t_0}(v_0, v),
\]
so $(v_0, v) =0$ and $v_0 \perp \mathcal{N}(B^* - I)$.
\end{proof}

What we are left with now is the existence problem. Again, as in the
analysis of universality, it will be useful to distinguish the
discrete and continuous cases, although, as we shall see, there is a
remarkable similarity between them.

For a more detailed analysis we shall need a description of the
L\'evy measure $M$, which can be found in \cite{L1} (discrete case)
and in \cite{HM,J1,JM} (continuous case). To keep this paper as
self-contained as possible, we describe below the main points.

\emph{Discrete case}. Considering, if necessary, $1/a$ instead of $a$
we may assume that $a <1$, and further that $\|A\| <1$. Then, putting
\[
 Z_A = \{v: \|v\| \leq 1\quad\text{and}\quad \|A^{-1}v\| >1\},
\]
we have the following representation for $M$

\begin{equation}\label{25}
 M(E) =\sum_{n=-\infty}^\infty a^{-n} M(A^{-n} E\cap Z_A),\quad E
 \text{ --- Borel subset of } V_0,
\end{equation}
i.e., $M$ is determined by its restriction to $Z_A$ which, in turn,
may be an (almost) arbitrary finite Borel measure (see Remark
\ref{R1} below). Formula \eqref{25} can be rewritten in the form
\begin{equation}\label{26}
 M(E) = \sum_{n=-\infty}^\infty a^{-n} \int_{Z_A}
 \boldsymbol{1}_E(A^n u)\,M(du),
\end{equation}
and for any $M$-integrable function $f$ on $V_0$ we have
\begin{equation}\label{27}
 \int_{V_0} f(u)\,M(du) =\sum_{n=-\infty}^\infty a^{-n}
 \int_{Z_A} f(A^n u)\,M(du).
\end{equation}

\begin{remark}\label{R1}
The only restriction to the arbitrariness of $M|\,Z_A$ lies in the
fact that, in general, the measure $M$ is concentrated on some
subspace of $V$ determined by eigenvalues of $A$. More precisely, if
we put \linebreak $W = \ssupp(\mu)$, then we have a decomposition
\[
 W = X\oplus Y,
\]
where $X$ and $Y$ are $A$-invariant and such that
\begin{align*}
 \operatorname{sp}(A|\,X) &\subset \{z\in\mathbb{C}: |z|^2 < a\},\\
 \operatorname{sp}(A|\,Y) &\subset \{z\in\mathbb{C}: |z|^2 = a\};
\end{align*}
and $M$ must be concentrated on $X$ (see \cite{J,L1} for details).
Similar remarks apply to the measure $K_B$ in the continuous case
below.
\end{remark}

\emph{Continuous case}. Put
\[
 L_B = \{v: \|v\| =1\quad\text{and}\quad \|t^B v\| >1\quad
 \text{for}\quad t >1\},
\]
and define the mixing measure $K_B$ on the Borel subsets $E$ of $L_B$
by
\[
 K_B(E) = M(\{t^B v: v\in E, \quad t\geq 1\}).
\]
Then we have the following continuous counterpart of \eqref{26}
\begin{equation}\label{28}
 M(E) =\int_{L_B} \int_0^\infty {\boldsymbol{1}}_E(t^B u)\,
 \frac{dt}{t^2}\,K_B(du),
\end{equation}
and for any $M$-integrable function $f$ on $V_0$
\[
 \int_{V_0} f(u)\,M(du) = \int_{L_B} \int_0^\infty f(t^B u)\,
 \frac{dt}{t^2}\,K_B(du).
\]
Substituting in the last formula $t$ in place of $\log t$, we get
\begin{equation}\label{29}
 \int_{V_0} f(u)\,M(du) = \int_{L_B} \int_{-\infty}^\infty
 f(e^{tB} u) e^{-t}\,dt\,K_B(du).
\end{equation}

Now, we can formulate our final result.

\begin{theorem}\label{T13}
Let $\mu$ be a quasi-decomposable measure. Put
\[
 W_1 = \{v: A^* v = av\} = \mathcal{N}(A^* - aI),
\]
if for $\mu$ the discrete case holds, and
\[
 W_2 = \{v: B^* v =v\} =\mathcal{N}(B^* - I),
\]
if for $\mu$ the continuous case holds. Denote in these two cases
\[
 N_1 = Z_A,\qquad \nu_1 = M|\,Z_A,\qquad N_2 = L_B,\qquad \nu_2 = K_B.
\]
Then there is a universal quasi-decomposability centering
for $\mu$ if and only if
\begin{equation}\label{30}
 \int_{N_i} (u, w_i)\,\nu_i(du) =0\qquad\text{for all} \quad w_i\in W_i,
\end{equation}
where $i = 1$ or $2$, as the case may be.
\end{theorem}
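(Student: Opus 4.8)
The plan is to reduce the existence statement to the single orthogonality relations already isolated in Propositions \ref{P10} and \ref{P12}, and then to translate each such relation into the integral condition \eqref{30} by means of the representations \eqref{27} and \eqref{29} of the L\'evy measure $M$. Recall that by the reductions preceding the theorem we may assume $\mu$ is full, $\mathbb{A}(\mu)\subset\mathbb{O}$, and $\mu$ is universally centered with respect to $\mathbb{A}(\mu)$. In the discrete case, combining Proposition \ref{P10} with Proposition \ref{P9} shows that a universal centering exists if and only if $h_1\perp\mathcal{N}(A^*-aI)=W_1$ (here $a$ is the generator, $\|A\|<1$). In the continuous case, the implication (i)$\Leftrightarrow$(ii) of Proposition \ref{P12} shows that a universal centering exists if and only if $v_0\perp\mathcal{N}(B^*-I)=W_2$. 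So in each case it remains only to evaluate the relevant inner products against an eigenvector $w$ and recognize condition \eqref{30}.

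\emph{Discrete case.} Fix $w\in W_1$, so that $A^*w=aw$. Starting from \eqref{16}, I first observe that $(Am,w)=(m,A^*w)=a(m,w)$, so the deterministic terms $am-Am$ contribute nothing to $(h_{a,A},w)$; likewise $(Au,w)=a(u,w)$, so the integrand becomes $-a\,(u,w)\,\frac{\|u\|^2-\|Au\|^2}{(1+\|Au\|^2)(1+\|u\|^2)}$. Applying \eqref{27} with this integrand and using $(A^nu,w)=a^n(u,w)$, the weight $a^{-n}$ cancels the factor $a^n$ exactly, and writing $a_n=\|A^nu\|^2$ the inner sum telescopes:
\[
 \sum_{n=-\infty}^{\infty}\Big(\frac{1}{1+a_{n+1}}-\frac{1}{1+a_n}\Big)=1,
\]
since $\|A\|<1$ and, by Remark \ref{R1}, $M$ is concentrated on the subspace $X$ on which $a_n\to 0$ as $n\to+\infty$ and $a_n\to\infty$ as $n\to-\infty$. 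Hence $(h_{a,A},w)=-a\int_{Z_A}(u,w)\,M(du)=-a\int_{N_1}(u,w_1)\,\nu_1(du)$, so $h_1\perp W_1$ holds if and only if \eqref{30} holds with $i=1$.

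\emph{Continuous case.} Fix $w\in W_2$, so $B^*w=w$ and $e^{tB^*}w=e^tw$. In \eqref{19} the term $((I-B)m,w)=(m,w)-(m,B^*w)=0$ vanishes. For the integral, set the integrand $f(u)=(u,w)\,\frac{2(Bu,u)}{(1+\|u\|^2)^2}$ and apply \eqref{29}. With $g(t)=\|e^{tB}u\|^2$ one has $g'(t)=2(Be^{tB}u,e^{tB}u)$ by \eqref{18}, together with $(e^{tB}u,w)=e^t(u,w)$; therefore $f(e^{tB}u)e^{-t}=(u,w)\,g'(t)/(1+g(t))^2$, whose $t$-integral is a total derivative,
\[
 \int_{-\infty}^{\infty}\frac{g'(t)}{(1+g(t))^2}\,dt=\Big[-\frac{1}{1+g(t)}\Big]_{-\infty}^{\infty}=1,
\]
using that on $L_B$ (and on the support subspace, by the analogue of Remark \ref{R1}) $g(t)\to\infty$ as $t\to+\infty$ and $g(t)\to 0$ as $t\to-\infty$. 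Thus $(v_0,w)=\int_{L_B}(u,w)\,K_B(du)=\int_{N_2}(u,w_2)\,\nu_2(du)$, so $v_0\perp W_2$ holds if and only if \eqref{30} holds with $i=2$.

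The two computations are strictly parallel, and the heart of each is the collapse of a L\'evy-measure integral into a single finite boundary value, made possible by the eigenvector condition $A^*w=aw$ (resp.\ $B^*w=w$), which converts the action of $A$ (resp.\ $e^{tB}$) on $w$ into a pure scalar that cancels the weight $a^{-n}$ (resp.\ $e^{-t}$). The main obstacle is the justification of the passage to the boundary: one must verify $M$-integrability in order to interchange the summation (resp.\ the inner $t$-integration) with the $\nu_i$-integral and to apply the telescoping/fundamental-theorem argument, and one must invoke Remark \ref{R1} to secure the correct limiting behaviour of $\|A^nu\|$ and $\|e^{tB}u\|$ on the support of $M$. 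Since $(u,w)$ is bounded on the bounded set $N_i$ and $\nu_i$ is finite, the relevant partial telescoping sums/integrals are dominated, so dominated convergence applies; the vanishing of the drift $m$ is an automatic consequence of the eigenvector condition and needs no extra hypothesis.
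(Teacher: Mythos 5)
Your proof is correct and follows essentially the same route as the paper: reduce via Propositions \ref{P10} and \ref{P12} to the orthogonality of $h_1$ (resp.\ $v_0$) to $W_i$, kill the drift terms using the eigenvector relation, and then apply the representations \eqref{27} and \eqref{29} so that the weight cancels and the inner sum telescopes (resp.\ the inner $t$-integral evaluates to $1$). The only cosmetic differences are that you retain the harmless nonzero factor $-a$ in the discrete case and evaluate the $t$-integral as a boundary term rather than by the substitution $s=\|e^{tB}u\|^2$.
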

\begin{proof}[Proof. Discrete case]
The condition for the existence of a universal centering is given by
\eqref{12}, which by virtue of formula \eqref{16} is equivalent to
\begin{equation}\label{31}
 \int_{V_0}\frac{\|u\|^2-\|Au\|^2}{(1+\|A u\|^2)(1 +\|u\|^2)} u\,M(du)\,
 \perp\,\mathcal{N}(A^* - aI).
\end{equation}
For each $w\in W_1$ we have by \eqref{27}
\begin{align*}
 &\bigg(\int_{V_0}\frac{\|u\|^2-\|Au\|^2}{(1+\|A u\|^2)
 (1 +\|u\|^2)} u\,M(du),w\bigg)\\
 =&\bigg(\int_{V_0}\Big(\frac{1}{1+\|Au\|^2} - \frac{1}
 {1+\|u\|^2}\Big) u\,M(du), w\bigg)\\
 =&\sum_{n=-\infty}^\infty a^{-n}\bigg(\int_{Z_A}
 \Big(\frac{1}{1+\|A^{n+1}u\|^2} - \frac{1}{1+\|A^nu\|^2}\Big)
 A^n u\,M(du), w\bigg)\\
 =&\sum_{n=-\infty}^\infty a^{-n} \bigg(\int_{Z_A}
 \Big(\frac{1}{1+\|A^{n+1}u\|^2} -\frac{1}{1+\|A^n u\|^2}\Big)
 u\,M(du), A^{*n}w\bigg)\\
 =&\bigg(\int_{Z_A} \sum_{n=-\infty}^\infty \Big(\frac{1}
 {1+\|A^{n+1}u\|^2} - \frac{1}{1+\|A^n u\|^2}\Big) u\,M(du), w\bigg)\\
 =&\bigg(\int_{Z_A} \Big(\lim_{n\to\infty} \frac{1}{1+\|A^{n+1}u\|^2}
 - \lim_{n\to\infty} \frac{1}{1+\|A^{-n}u\|^2}\Big)u\,M(du), w\bigg)\\
 =&\bigg(\int_{Z_A}u\,M(du), w\bigg)
\end{align*}
since $\|A^n\| \leq \|A\|^n \to 0$ and
\[
 \|A^{-n} u\| \geq \frac{\|u\|}{\|A\|^n}\qquad\text{for}\quad n>0.
\]
Thus \eqref{31} is equivalent to \eqref{30} in the discrete case.

\emph{Continuous case}. By Proposition \ref{P12} and formula
\eqref{19} the existence of a universal centering is equivalent to
\begin{equation}\label{32}
 \int_{V_0} \frac{2(B u, u)}{(1+\|u\|^2)^2}\,u\,M(du)\,\perp\,
 \mathcal{N}(B^* - I).
\end{equation}
For each $w\in W_2$ we have by \eqref{29}
\begin{align*}
 &\bigg(\int_{V_0}\frac{2(B u, u)}{(1+\|u\|^2)^2}u\,M(du), w\bigg)\\
 =&\bigg(\int_{L_B} \int_{-\infty}^\infty \frac{2(Be^{tB}u, e^{tB}u)}
 {(1 +\|e^{tB}u\|^2)^2}\,e^{t(B-I)}u\,dt\,K_B(du), w\bigg)\\
 =&\int_{L_B} \int_{-\infty}^\infty \frac{2(Be^{tB} u, e^{tB} u)}
 {(1+\|e^{tB} u\|^2)^2}\,(u, e^{t(B^*-I)}w)\,dt\,K_B(du)\\
 =&\int_{L_B}\Big(\int_{-\infty}^\infty \frac{2(B e^{tB}u, e^{tB}u)}
 {(1 +\|e^{tB} u\|^2)^2}\,dt\Big) (u, w)\,K_B(du)\\
 =&c \int_{L_B} (u, w)\,K_B(du),
\end{align*}
where
\[
 c =\int_{-\infty}^\infty \frac{2(B e^{tB} u, e^{tB} u)}
 {(1+ \|e^{tB} u\|^2)^2}\,dt,
\]
and substitution
\[
 s = \|e^{tB} u\|^2
\]
gives in view of \eqref{18}
\[
 c = \int_0^\infty \frac{ds}{(1 +s)^2}=1.
\]
Consequently,
\[
 \bigg(\int_{V_0}\frac{2(B u, u)}{(1+\|u\|^2)^2}u\,M(du), w\bigg)=
 \int_{L_B} (u, w)\,K_B(du),
\]
thus \eqref{32} is equivalent to \eqref{30} in the continuous case
too, and the proof of the theorem has been finished.
\end{proof}

\begin{remark}
As noted in the Introduction, a condition equivalent to the existence
of centering in the continuous case was found in \cite{Sa}. However,
its form there is more complicated and does not fit into our
``homogeneous'' scheme given in Theorem \ref{T13}.
\end{remark}
The sets $Z_A, L_B$  and the measure $K_B$ depend on the choice of
operators $A$ and $B$. As for the continuous case, it is shown in
\cite{HJV} (cf. also \cite[Proposition 4.3.4]{JM}) that there is an
inner product on $V$ giving rise to a norm $||| \cdot |||$, and a
mixing measure $K$ on the unit sphere $L = \{v: |||v||| = 1\}$, such
that for every $B$ satisfying \eqref{13} we have
\[
 M(E) = \int_L \int_0^\infty \boldsymbol{1}_E(t^B u)\,\frac{dt}{t^2}\,K(du).
\]
In the discrete case, it can also be shown that under a suitable
inner product norm we can have the set $Z_A$ independent of $A\in
G_a(\mu)$ (though it will still depend on $a$) (cf. \cite{L3}).
Denoting this set by $Z$, we get the formula
\[
 M(E) =\sum_{n=-\infty}^\infty a^{-n} \int_Z \boldsymbol{1}_E(A^n u)\,M(du)
\]
for every $A$ satisfying \eqref{2}. Theorem \ref{T13} may now be
given the following form.
\begin{Theorem}
The existence of a universal quasi-de\-com\-po\-sa\-bi\-li\-ty
centering is equivalent to the conditions:
\begin{enumerate}
 \item[(i)] Discrete case
 \[
  \int_Z (u,w)\,M(du)=0 \qquad\text{for all }w\in\mathcal{N}(A^*-aI)
 \]
 where $A$ is \emph{any} operator satisfying \eqref{2};
 \item[(ii)] Continuous case
 \[
  \int_L (u,w)\,K(du)=0 \qquad\text{for all }
  w\in\mathcal{N}(B^*-aI),
 \]
 where $B$ is \emph{any} operator satisfying \eqref{13}.
\end{enumerate}
\end{Theorem}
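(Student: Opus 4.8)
The plan is to recognise Theorem~13' as a restatement of Theorem~\ref{T13} in which the operator-dependent data $(Z_A, M|\,Z_A)$ and $(L_B, K_B)$ are replaced by the operator-\emph{independent} data $(Z, M|\,Z)$ and $(L, K)$ provided by \cite{L3} and \cite{HJV}. Accordingly, I would repeat the two computations from the proof of Theorem~\ref{T13} word for word, feeding in these new representations of $M$ in place of \eqref{27} and \eqref{29}, and then check that the only norm-sensitive quantities occurring there are left unchanged by the passage to the new inner product.

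\emph{Continuous case.} By Proposition~\ref{P12} and \eqref{19}, a universal centering exists if and only if \eqref{32} holds. Writing the \cite{HJV} representation in the additive form of \eqref{29}, now with $L$ and $K$ in place of $L_B$ and $K_B$ (which is legitimate for \emph{every} $B$ satisfying \eqref{13}), and pairing the left-hand side of \eqref{32} with an arbitrary $w\in\mathcal{N}(B^*-I)$, the identity $e^{tB^*}w=e^tw$ collapses the exponential factors just as before and yields
\[
 \Big(\int_{V_0}\frac{2(Bu,u)}{(1+\|u\|^2)^2}u\,M(du),\,w\Big)
 = c\int_L (u,w)\,K(du),\qquad
 c=\int_{-\infty}^\infty\frac{2(Be^{tB}u,e^{tB}u)}{(1+\|e^{tB}u\|^2)^2}\,dt.
\]
The substitution $s=\|e^{tB}u\|^2$, together with \eqref{18}, turns $c$ into $\int_{g(-\infty)}^{g(+\infty)}(1+s)^{-2}\,ds$ with $g(t)=\|e^{tB}u\|^2$; since the endpoints $g(-\infty)=0$ and $g(+\infty)=+\infty$ are dictated only by the spectrum of $B$ and not by the particular norm, one still gets $c=1$. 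Thus \eqref{32} becomes $\int_L(u,w)\,K(du)=0$ for all $w\in\mathcal{N}(B^*-I)$, which is condition~(ii); and since the representation is valid simultaneously for every admissible $B$, the criterion may be checked with any such $B$.

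\emph{Discrete case.} Here \eqref{12} and \eqref{16} reduce the existence question to \eqref{31}. Inserting the $A$-independent representation $M(E)=\sum_{n}a^{-n}\int_Z\boldsymbol{1}_E(A^nu)\,M(du)$ of \cite{L3} and pairing \eqref{31} with $w\in\mathcal{N}(A^*-aI)$, the relation $A^{*n}w=a^nw$ cancels the weights $a^{-n}$ and the integrand telescopes to $\sum_{n}\big((1+\|A^{n+1}u\|^2)^{-1}-(1+\|A^nu\|^2)^{-1}\big)$, whose value is $\lim_{n\to\infty}(1+\|A^{n+1}u\|^2)^{-1}-\lim_{n\to\infty}(1+\|A^{-n}u\|^2)^{-1}=1-0=1$, the limits $\|A^nu\|\to0$ and $\|A^{-n}u\|\to\infty$ being again independent of the norm. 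Hence \eqref{31} reads $\int_Z(u,w)\,M(du)=0$ for all $w\in\mathcal{N}(A^*-aI)$, which is condition~(i), valid for any $A$ satisfying \eqref{2} by the same reasoning.

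The only genuinely delicate point --- and the place where the argument could go wrong --- is the claim that replacing the original inner product by the one of \cite{HJV} (respectively \cite{L3}) leaves the normalising constants intact, namely $c=1$ in the continuous case and the telescoped sum $1$ in the discrete case. The safeguard is that both are governed solely by the asymptotics $\|e^{tB}u\|\to0,\infty$ and $\|A^{\pm n}u\|\to0,\infty$, which persist under any equivalent norm, while the dominated-convergence and interchange-of-limits steps needed to legitimise the passage to the limit are precisely those already carried out in the proof of Theorem~\ref{T13}. Apart from this, one has only to quote the two cited representations and to confirm that each holds for every operator arising in \eqref{2} (respectively \eqref{13}).
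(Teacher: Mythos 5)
Your proposal is correct and follows exactly the route the paper takes: the paper offers no separate proof of Theorem 13', presenting it as an immediate reformulation of Theorem~\ref{T13} once the operator-independent representations of $M$ from \cite{L3} and \cite{HJV} are substituted for \eqref{27} and \eqref{29}, which is precisely what you carry out. Your explicit check that the normalising constants ($c=1$ and the telescoped sum) depend only on the norm-independent asymptotics $\|A^{\pm n}u\|\to 0,\infty$ and $\|e^{tB}u\|\to 0,\infty$ is a welcome elaboration of the one point the paper leaves tacit.
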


\begin{remark}[cf. \cite{JM,Sa}]
Ordinary multivariate semistable and stable measures are obtained if
there is an operator $A$ satisfying \eqref{2}, or operator $B$
satisfying \eqref{13}, respectively, being a multiple of the
identity. The only problem with the existence of centering in this
case arises when $A=aI$ and $B=I$. In such a case we have
\begin{align*}
 &Z_A=\{v\colon a<\|v\|\leq 1\}, \qquad L_B=\{v\colon \|v\|=1\},\\
 &\mathcal{N}(A^*-aI)=\mathcal{N}(B^*-I)=V,
\end{align*}
thus the conditions on the existence of centering are respectively:
\[
 \int_{a<\|u\|\leq 1}u\,M(du)=0,\qquad \int_{\|u\|=1}u\,M(du)=0.
\]
\end{remark}

\emph{Acknowledgments}. The author is indebted to Dr. Hans-Peter
Scheffler for calling his attention to paper \cite{Sa}.

\end{document}